\numberwithin{equation}{section} \theoremstyle{plain}
\newtheorem{theorem}{Theorem}[section]
\newtheorem{coro}[theorem]{Corollary}
\newtheorem{lemma1235}[theorem]{Lemma}
\theoremstyle{definition}
\newtheorem*{re}{Remark}
\theoremstyle{plain}
\newtheorem{theoremalph}{theorem}
\newtheorem{atm}[theoremalph]{Theorem}
\def\begr{\begin{eqnarray}} \def\endr{\end{eqnarray}}
\begin{document}
\title
{NONLINEAR DIFFERENTIAL EQUATION AND ANALYTIC FUNCTION SPACES}
\author{Hao Li \quad and \quad Songxiao Li$^*$ }
\address{College of Mathematics and Information Science, Henan Normal University, Xinxiang 453007, China}\email{lihao20102010@163.com(H. Li)}
\vskip 3mm
\address{Institute of Systems Engineering, Macau University of Science and Technology,  Avenida Wai Long,
Taipa, Macau. }\email{jyulsx@163.com (S. Li)} \noindent\subjclass[2000]{34M10,
30D35 }
\begin{abstract} In this paper we consider the nonlinear complex differential equation $$(f^{(k)})^{n_{k}}+A_{k-1}(z)(f^{(k-1)})^{n_{k-1}}+\cdot\cdot\cdot+A_{1}(z)(f')^{n_{1}}+A_{0}(z)f^{n_{0}}=0,
$$where $ A_{j}(z)$,  $ j=0, \cdots, k-1 $,  are analytic in the unit disk $ \mathbb{D} $, $ n_{j}\in R^{+} $
for all $ j=0, \cdots, k $. We investigate this nonlinear
differential equation from two aspects. On one hand, we provide some
sufficient conditions on coefficients such that all solutions of
this equation belong to a class of M\"{o}bius invariant function
space, the so-called  $Q_K$ space. On the other hand, we find some growth
estimates for the analytic solutions of this equation if the
coefficients belong to some analytic function spaces.
\thanks{\noindent{\it Keywords}: Nonlinear differential equation, growth estimate, Bloch space, $Q_K$ space.\\
\noindent~~~ *Corresponding author.}
\end{abstract}
\maketitle
\section{Introduction}
  Let $ \mathbb{D} $ be the unit disk in the complex plane $\mathbb{C}$. Let $ g(a,z)=-\log|\varphi_{a}(z)| $ be the Green function on $
\mathbb{D} $ with logarithmic singularity at $ a\in\mathbb{D} $,
where $\varphi_{a}(z)=(a-z)/(1-\bar{a}z) $ is the M\"{o}bius
transformation of $ \mathbb{D} $. Denote by $ H(\mathbb{D}) $ the
set of all analytic functions on $ \mathbb{D} $.  For $ s>0 $,  the
Bloch-type space, denoted by $ \mathfrak{B}^{s} $, consists of all
$f \in H(\mathbb{D})$ for which \begin{align}\label{85}
\|f\|_{\mathfrak{B}^{s}}=|f(0)|+\sup_{z\in\mathbb{D}}|f'(z)|(1-|z|^{2})^{s}<\infty.
\end{align}
 When $s=1$, we get the classical Bloch space $\mathfrak{B}$.  Obviously, $\mathfrak{B}^{s_{1}}\subsetneqq\mathfrak{B}\subsetneqq
\mathfrak{B}^{s_{2}} $ for $ 0<s_{1}<1<s_{2}<\infty $.  See
\cite{zhu2} for the theory of Bloch-type spaces. \par

 For $ 0\leqslant s<\infty$, $ 0<t<\infty $, the weighted Hardy space, denoted by  $ H_{s}^{t} $,  consists
of all $ f\in H(\mathbb{D}) $ satisfying
\begin{align} &\|f\|_{H_{s}^{t}}=\sup_{0\leqslant r<1}(1-r^{2})^{s}\left(\frac{1}{2\pi}\int_{0}^{2\pi}
|f(re^{i\varphi})|^{t}d\varphi\right)^{\frac{1}{t}}<\infty .
\end{align}
 The Bers-type space, denoted by $H^\infty_s$, consists of all $f\in H(\mathbb{D})$  such that
  \begin{align} \|f\|_{ H^\infty_s}=\sup_{z\in \mathbb{D}} (1-|z|^2)^s \left|  f(z)\right|<\infty.\label{75}
\end{align}
It is easy to check that $H^\infty_s$ is a Banach space with the
norm $\|\cdot \|_{ H^\infty_s}$. The Proposition 7 in \cite{zhu2}
gives that $ \mathfrak{B}^{s+1}=H_{s}^{\infty}$ for $ s>0 $.

For a nondecreasing function $ K:[0,\infty)\rightarrow[0,\infty) $, the $ Q_{K} $ space consists of all $ f\in H(\mathbb{D}) $  for
which \begin{align}\label{21}
\|f\|^{2}_{Q_{K}}=\sup_{a\in\mathbb{D}}\int_{\mathbb{D}}|f'(z)|^{2}K(g(a,z))d\sigma(z)<\infty.
\end{align}
Here  $ d\sigma $ is an area measure on $ \mathbb{D} $ normalized
such that $ \sigma(\mathbb{D})= 1 $. When $K(t)=t^p$, then $
Q_{K} $ space coincides with the $Q_p$ space. For all $ p>1 $, the $
Q_{p} $ space is  equal to the Bloch space $ \mathfrak{B} $. The authors
in \cite{ww} proved that $ Q_{K}\subseteq \mathfrak{B} $, and hence
we obtain that $ Q_{K}\subsetneq H_{\alpha}^{\infty} $ for any
$\alpha\in (0,\infty) $.\par



 The growth of  solutions of the linear differential equation
\begin{align}
f^{(k)}+A_{k-1}(z)f^{(k-1)}+\cdot\cdot\cdot+A_{1}(z)f'+A_{0}(z)f=0
\label{25}\end{align}
  with analytic coefficients has attracted a lot
of attention, see for example, \cite{be,cy,cs,cgh,gsw,
he,hkr1,hkr2,hkr3,hkr4,hi,kr,kr1,la,hw, po,r}. Ch. Pommerenke \cite{po} studied the second-order equation
\begin{align} f''+A(z)f=0, \label{2000} \end{align} where $ A(z) $ is an
analytic function in $ \mathbb{D} $. He found some sufficient
conditions on the coefficient function $ A(z) $ such that all
solutions of \eqref{2000} are in $ H^{2} $.   Heittokangas  in
\cite{he} found a sufficient condition on the coefficient function $
A(z) $ such that all solutions of \eqref{2000} are in $
\cap_{0<p<\infty}Q_{p}$.  In 2011,   Li and  Wulan in \cite{hw}
studied the equation \eqref{25} and gave the sufficient conditions
on coefficients of \eqref{25} such that all solutions of \eqref{25}
belong to the $Q_{K} $ space.  The main results are stated as
follows.

\begin{atm}[{\cite[Theorem 2.1]{hw}}]  \label{32} Let $ 1<c<3/2 $ and let $ K $ satisfy \begin{align}
  \int_{1}^{\infty}\frac{\varphi_{K}(s)}{s^{2c-1}}ds<\infty,
\label{22}
\end{align} where $ \varphi_{K}(s)=\sup_{0\leqslant t \leqslant 1}K(st)/K(t),\,\, 0<s<\infty. $
There exists a constant $ \alpha=\alpha(k, c, K)>0 $ such that if
the coefficients of \eqref{25} satisfy the following
\begin{align} &\|A_{j}\|_{H_{k-j}^{\infty}}=\sup_{z\in\mathbb{D}}|A_{j}(z)|(1-|z|^{2})^{k-j}
\leqslant \alpha,\,j=1,\cdot\cdot\cdot,k-1,\notag
\shortintertext{and}
&\|A_{0}\|_{H_{k-c}^{\infty}}=\sup_{z\in\mathbb{D}}|A_{0}(z)|(1-|z|^{2})^{k-c}\leqslant
\alpha, \notag \end{align} then all solutions of \eqref{25} belong
to the $ Q_{K} $ space.
\end{atm}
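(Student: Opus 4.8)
The plan is to combine a sharp growth estimate for the solutions, read off from the differential equation, with an integral estimate for the weight $K(g(a,z))$, read off from \eqref{22}. Concretely, I would first prove that if $\alpha$ is small enough (depending on $k$, $c$, $K$), then every solution $f$ of \eqref{25} is bounded on $\mathbb{D}$ and satisfies $|f'(z)|\le C(1-|z|^2)^{-(2-c)}$; then, since $1<c<3/2$, I would show that \eqref{22} forces $\sup_{a\in\mathbb{D}}\int_{\mathbb{D}}K(g(a,z))(1-|z|^2)^{-2(2-c)}\,d\sigma(z)<\infty$. These two facts give at once, for any solution $f$,
\begin{equation*}
\|f\|_{Q_K}^2=\sup_{a\in\mathbb{D}}\int_{\mathbb{D}}|f'(z)|^2K(g(a,z))\,d\sigma(z)\le C^2\sup_{a\in\mathbb{D}}\int_{\mathbb{D}}\frac{K(g(a,z))}{(1-|z|^2)^{2(2-c)}}\,d\sigma(z)<\infty,
\end{equation*}
so $f\in Q_K$.

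\emph{Growth of the solutions.} A standard growth estimate for linear equations in $\mathbb{D}$ (controlling $\log M(r,f)$ by $\int_0^r\max_{0\le j\le k-1}|A_j(te^{i\theta})|^{1/(k-j)}\,dt$, see e.g. \cite{cgh,gsw}) together with $\|A_j\|_{H^\infty_{k-j}}\le\alpha$ for $1\le j\le k-1$ and $\|A_0\|_{H^\infty_{k-c}}\le\alpha$ gives $\max_j|A_j(z)|^{1/(k-j)}\le C\alpha^{1/(k-1)}(1-|z|^2)^{-1}$, hence $|f(z)|\le C(1-|z|^2)^{-\delta}$ with $\delta=\delta(k,\alpha)\to0$ as $\alpha\to0$. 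To obtain the derivative bound I would substitute $h=f'$; then $h$ solves the inhomogeneous equation
\begin{equation*}
h^{(k-1)}+A_{k-1}h^{(k-2)}+\cdots+A_1h=-A_0f,
\end{equation*}
whose homogeneous solutions again obey the bound $(1-|z|^2)^{-\delta}$, and I would represent $h$ by variation of parameters, $h(z)=h_{\mathrm{hom}}(z)+\int_0^zG(z,w)A_0(w)f(w)\,dw$. Since $\alpha$ is small, the Green kernel $G(z,w)$ of the order-$(k-1)$ operator is a small perturbation of $(z-w)^{k-2}/(k-2)!$ and satisfies $|G(z,w)|\lesssim|z-w|^{k-2}(1-|z|)^{-C\alpha}$; using $|z-w|\le 1-|te^{i\theta}|$ on $[0,z]$ and $\|A_0\|_{H^\infty_{k-c}}\le\alpha$, the forcing term is controlled by
\begin{equation*}
\alpha\,(1-|z|)^{-C\alpha}\int_0^{|z|}(1-t)^{k-2}(1-t)^{c-k-\delta}\,dt=\alpha\,(1-|z|)^{-C\alpha}\int_0^{|z|}(1-t)^{c-2-\delta}\,dt,
\end{equation*}
which is finite exactly because $c>1$ (so $c-2-\delta>-1$ for $\alpha$ small). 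Hence $|f'(z)|=|h(z)|\le C(1-|z|^2)^{-\delta}$, and I would fix $\alpha=\alpha(k,c,K)$ so small that in addition $\delta<2-c$, which is possible since $2-c>\tfrac12$.

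\emph{The weight estimate.} After the change of variables $w=\varphi_a(z)$, using $g(a,z)=-\log|w|$ and $1-|z|^2=(1-|a|^2)(1-|w|^2)|1-\bar aw|^{-2}$, the integral $\int_{\mathbb{D}}K(g(a,z))(1-|z|^2)^{-2(2-c)}\,d\sigma(z)$ becomes, after the angular integration, a radial integral; the angular integral $\int_0^{2\pi}|1-\bar a\rho e^{i\theta}|^{-4(c-1)}\,d\theta$ converges because $1<c<3/2$ gives $4(c-1)<2$, and its size is governed by $1-|a|\rho$. Splitting the radial integral at the scale $1-|a|$ and substituting $-\log|w|\sim(1-|a|)s$ reduces the essential contribution to a constant multiple of $\int_1^\infty\varphi_K(s)\,s^{-(2c-1)}\,ds$, which is finite by \eqref{22} since $2c-1=3-2(2-c)\in(1,2)$, while the piece with $|w|$ near the origin is handled by the monotonicity of $K$; this is a $Q_K$-weight estimate of the type appearing in \cite{ww}.

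The main obstacle is the derivative bound in the growth step, more precisely the quantitative variation-of-parameters estimate: one has to control the Green kernel $G(z,w)$ of the non-autonomous, boundary-singular order-$(k-1)$ operator well enough that the vanishing factor $(1-|w|)^{k-2}$ survives and cancels the strong singularity $(1-|w|^2)^{c-k}$ of $A_0$ down to the integrable order $c-2$; this is exactly where the smallness of $\alpha$ and the hypothesis $c>1$ are used. The weight estimate is a careful but routine $Q_K$ computation, and the final assembly above is immediate.
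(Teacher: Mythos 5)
Your ``weight estimate'' step is where the proposal breaks down, and it is not a repairable technicality: the claimed bound $\sup_{a\in\mathbb{D}}\int_{\mathbb{D}}K(g(a,z))(1-|z|^{2})^{-2(2-c)}d\sigma(z)<\infty$ is simply not a consequence of \eqref{22}. Take $K(t)=t^{p}$ with $0<p<\min(2c-2,\,3-2c)$; such a $p$ exists for every $c\in(1,3/2)$, and \eqref{22} holds because $\varphi_{K}(s)=s^{p}$ and $p<2c-2$. Yet already at $a=0$ one has $K(g(0,z))\asymp(1-|z|)^{p}$ near the boundary, so the integrand is comparable to $(1-|z|)^{p+2c-4}$ with $p+2c-4<-1$, and the integral diverges. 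Equivalently, for such $K$ the space $Q_{K}=Q_{p}$ does not contain every $f$ with $|f'(z)|\leqslant C(1-|z|^{2})^{-(2-c)}$ (that embedding needs $p>3-2c$, which is compatible with $p<2c-2$ only when $c>5/4$), so no pointwise derivative bound of order $2-c$ can yield the conclusion for all $K$ satisfying \eqref{22}; your reduction of the radial integral to $\int_{1}^{\infty}\varphi_{K}(s)s^{-(2c-1)}ds$ loses a factor involving $K$ at scale $1-|a|$ and does not close. By contrast, the variation-of-parameters estimate you single out as the main obstacle is a secondary issue (for small $\alpha$ one can even get a better exponent than $2-c$, at least for $k=2$), but an absolute growth bound alone still cannot decide membership in these small $Q_{K}$ spaces.

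The hypothesis \eqref{22} is used quite differently in the actual proof (in \cite{hw}, and mirrored in this paper's proof of Theorem \ref{20}): one works with the $k$-th derivative characterization of $Q_{K}$ (Lemma \ref{39}, from \cite{wuzhu}), dilates $f_{\rho}(z)=f(\rho z)$ so that all quantities are finite, substitutes the equation \eqref{25} for $f_{\rho}^{(k)}$ into the area integral, and bounds the terms $j=1,\dots,k-1$ by $C\alpha^{2}\|f_{\rho}\|_{Q_{K}}^{2}$; the term $j=0$, carrying the weight $(1-|z|^{2})^{2c-2}$, is exactly where \eqref{22} with $1<c<3/2$ enters, giving $\int_{\mathbb{D}}|f_{\rho}|^{2}(1-|z|^{2})^{2c-2}K(1-|\varphi_{a}(z)|^{2})d\sigma(z)\leqslant C(\|f_{\rho}\|_{Q_{K}}^{2}+|f(0)|^{2})$. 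In other words, \eqref{22} compares a weighted integral of the solution with its \emph{own} $Q_{K}$ norm, not with an absolute constant; the smallness of $\alpha$ then lets one absorb $C\alpha^{2}\|f_{\rho}\|_{Q_{K}}^{2}$ into the left-hand side and let $\rho\to1$. Any attempt to salvage your route would have to replace the absolute weight bound by such a self-referential inequality, at which point you have essentially reconstructed this absorption argument.
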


\begin{atm}[{\cite[Theorem 2.6]{hw}}]\label{47} Let the nondecreasing function $ K $ satisfy
\begin{align}\int_{0}^{1}\frac{\varphi_{K}(s)}{s}ds<\infty. \label{43} \end{align}
 There exists a constant $ \beta=\beta(k, K)>0 $ such that if the
coefficients of \eqref{25} satisfy
 \begin{align}
&\|A_{j}\|_{H_{k-j}^{\infty}}=\sup_{z\in\mathbb{D}}|A_{j}(z)|(1-|z|^{2})^{k-j}
\leqslant \beta,\,\,j=1,\cdot\cdot\cdot,k-1,\notag
\shortintertext{and}
&\|A_{0}\|_{H_{k-1}^{\infty}}=\sup_{z\in\mathbb{D}}|A_{0}(z)|(1-|z|^{2})^{k-1}\leqslant
\beta, \notag\end{align}
  then all solutions of \eqref{25} belong to the $ Q_{K} $ space.
\end{atm}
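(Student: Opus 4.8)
The plan is to combine a sharp growth estimate for the solutions with an embedding of a small Bloch-type space into $Q_{K}$. Since the coefficients are analytic in $\mathbb{D}$, the solutions of \eqref{25} form a $k$-dimensional subspace of $H(\mathbb{D})$ and $Q_{K}$ is a linear space, so it suffices to show that one arbitrary solution $f$ lies in $Q_{K}$. The first step is to prove that the hypotheses force $f$ into $\mathfrak{B}^{s_{0}}$ for some $s_{0}=s_{0}(k,\beta)\in(0,\tfrac12)$ with $s_{0}\to 0$ as $\beta\to 0$, that is,
\[
\|f\|_{\mathfrak{B}^{s_{0}}}=|f(0)|+\sup_{z\in\mathbb{D}}|f'(z)|(1-|z|^{2})^{s_{0}}<\infty .
\]
To obtain this, rewrite \eqref{25} as the system of integral equations obtained by integrating $f^{(k)}=-\sum_{j=0}^{k-1}A_{j}f^{(j)}$ repeatedly from the origin, and run a bootstrap on the ansatz $|f^{(m)}(z)|\leqslant M(1-|z|^{2})^{-(m-1)-s_{0}}$ for $m=1,\dots,k$, together with $|f(z)|\leqslant M$. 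The weights are exactly critical for the terms $A_{j}f^{(j)}$ with $1\leqslant j\leqslant k-1$: from $|A_{j}(z)|\leqslant\beta(1-|z|^{2})^{-(k-j)}$ one gets $|A_{j}(z)f^{(j)}(z)|\leqslant M\beta(1-|z|^{2})^{-(k-1)-s_{0}}$, so after the iterated integrations these terms contribute $\lesssim(\beta M/s_{0})(1-|z|^{2})^{-s_{0}}$ to $f'$; because $\|A_{0}\|_{H_{k-1}^{\infty}}\leqslant\beta$ (and not merely $\|A_{0}\|_{H_{k}^{\infty}}\leqslant\beta$), the term $A_{0}f$ is of strictly lower order and is harmless. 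Taking $s_{0}$ to be a fixed multiple of $\beta$ makes the bootstrap close once $\beta$ is small, and a routine open--closed argument in $r=|z|\in[0,1)$, anchored on a disk $|z|\leqslant r_{0}$ where the derivatives of $f$ are bounded, turns this into the stated estimate.

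For the second step I would show that $f\in\mathfrak{B}^{s_{0}}$ with $0<s_{0}<\tfrac12$, together with \eqref{43}, implies $f\in Q_{K}$. In \eqref{21} substitute $z=\varphi_{a}(w)$; since $g(a,\varphi_{a}(w))=-\log|w|$, $d\sigma(z)=|\varphi_{a}'(w)|^{2}d\sigma(w)$, and $|\varphi_{a}'(w)|=(1-|\varphi_{a}(w)|^{2})/(1-|w|^{2})$, one has
\[
\|f\|_{Q_{K}}^{2}=\sup_{a\in\mathbb{D}}\int_{\mathbb{D}}|f'(\varphi_{a}(w))|^{2}\,|\varphi_{a}'(w)|^{2}\,K(-\log|w|)\,d\sigma(w).
\]
Inserting $|f'(\varphi_{a}(w))|\leqslant\|f\|_{\mathfrak{B}^{s_{0}}}(1-|\varphi_{a}(w)|^{2})^{-s_{0}}$ and $1-|\varphi_{a}(w)|^{2}=(1-|a|^{2})(1-|w|^{2})|1-\bar a w|^{-2}$ reduces the task to bounding
\[
(1-|a|^{2})^{2-2s_{0}}\int_{\mathbb{D}}\frac{K(-\log|w|)}{|1-\bar a w|^{4-4s_{0}}\,(1-|w|^{2})^{2s_{0}}}\,d\sigma(w)
\]
uniformly in $a$. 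On $|w|\leqslant\tfrac12$ the denominators are bounded below and $(1-|a|^{2})^{2-2s_{0}}\leqslant1$, so this part is $\lesssim\int_{|w|\leqslant 1/2}K(-\log|w|)\,d\sigma(w)$, which is finite since $Q_{K}$ is non-trivial (equivalently, \eqref{43} holds). On $\tfrac12<|w|<1$ one has $K(-\log|w|)\leqslant K(\log 2)$, and the classical disk estimate $\int_{\mathbb{D}}(1-|w|^{2})^{-2s_{0}}|1-\bar a w|^{-(4-4s_{0})}\,d\sigma(w)\asymp(1-|a|^{2})^{-(2-2s_{0})}$ --- valid precisely because $-2s_{0}>-1$ --- cancels the prefactor. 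Hence $\|f\|_{Q_{K}}<\infty$. (Had the first step only yielded $s_{0}\in[\tfrac12,1)$, one would instead retain the decay $K(-\log|w|)\asymp K(1-|w|)$ near $|w|=1$ and invoke $K(t)\leqslant K(1)\varphi_{K}(t)$ for $t\leqslant1$ together with $\int_{0}^{1}\varphi_{K}(t)t^{-1}\,dt<\infty$; this is where the full $\varphi_{K}$-form of \eqref{43} is needed.)

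The main obstacle is the first step: one must establish the growth estimate \emph{for every} solution and with $s_{0}$ genuinely small --- that is, verify that the iterated integration closes and that the constants do not deteriorate as $|z|\to1$, which is exactly where the precise scalings $\|A_{j}\|_{H_{k-j}^{\infty}}\leqslant\beta$ ($1\leqslant j\leqslant k-1$) and $\|A_{0}\|_{H_{k-1}^{\infty}}\leqslant\beta$ are exploited. Once this is in hand, the rest is a M\"{o}bius change of variables and the standard disk integral estimate, with condition \eqref{43} entering only to ensure that the resulting weighted integral converges.
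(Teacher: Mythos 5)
Your argument is correct, but it is not the route taken in \cite{hw} (nor the route this paper reproduces and extends in Theorem \ref{53}). The paper's method is an absorption argument in the $Q_K$ norm itself: one invokes the Wulan--Zhu characterization (Lemma \ref{39}, which is exactly where hypothesis \eqref{43} is used), substitutes $f^{(k)}=-\sum_j A_j f^{(j)}$ into the $k$-th derivative area integral for the dilation $f_\rho$, bounds the terms $j=1,\dots,k-1$ by $C\|f_\rho\|_{Q_K}^2$ via the same characterization and the term $j=0$ by a pointwise logarithmic growth estimate, and then absorbs $C\beta^{2}\|f_\rho\|_{Q_K}^2$ into the left-hand side before letting $\rho\to1$. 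You instead prove a pointwise a priori estimate --- $f\in H^\infty$ and $|f'(z)|\lesssim(1-|z|^2)^{-s_0}$ with $s_0\asymp\beta$ --- by iterated integration of the ODE, and then feed this into an elementary embedding $\mathfrak{B}^{s_0}\subset Q_K$ for $s_0<\tfrac12$. I checked your bootstrap: with the ansatz $|f^{(m)}(z)|\leqslant M(1-|z|^2)^{-(m-1)-s_0}$ the equation yields $|f^{(k)}|\leqslant k\beta M(1-|z|^2)^{-(k-1)-s_0}$, and the $m$-fold integrations return the ansatz with factor at worst $Ck\beta/s_0$, so taking $s_0$ a fixed multiple of $\beta$ closes the scheme; no open--closed argument is even needed, since the quantity $N(r)=\sup_{|z|\leqslant r}|f|+\sum_m\sup_{|z|\leqslant r}|f^{(m)}|(1-|z|^2)^{m-1+s_0}$ is finite for each $r<1$ and satisfies $N(r)\leqslant C_0(f)+\tfrac12N(r)$ directly. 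Your approach buys a strictly stronger conclusion (bounded solutions with slowly growing derivative, hence membership in every nontrivial $Q_K$), avoids Lemma \ref{39} entirely, and in fact barely uses \eqref{43}; its limitation is that it cannot reach Theorem \ref{32}, where $A_0$ is only controlled in $H^\infty_{k-c}$ with $c>1$ and solutions are genuinely only Bloch-type, so the bootstrap would not close with $s_0<\tfrac12$ --- which is precisely why the paper works with the $Q_K$ norm directly.

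Two small points of hygiene. First, the finiteness of $\int_{|w|\leqslant1/2}K(-\log|w|)\,d\sigma(w)$ does not follow from \eqref{43} (which constrains $K$ only near $0$); you need the standing normalization of the $Q_K$ literature that $K(t)=K(1)$ for $t\geqslant1$, equivalently the nontriviality of $Q_K$. You flag this, which is enough, but state it as an explicit hypothesis. Second, once $K$ is so normalized your step 2 collapses to the one-line bound $\|f\|_{Q_K}^2\leqslant K(1)\,\|f\|_{\mathfrak{B}^{s_0}}^2\int_{\mathbb{D}}(1-|z|^2)^{-2s_0}d\sigma(z)<\infty$; the M\"obius change of variables and the classical integral estimate are not needed.
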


In 2004, Heittokangas, Korhonen and R\"{a}tty\"{a} in \cite{hkr1}
studied the equation \eqref{25}, and gave the estimate of solutions
of \eqref{25} by using the Herold's comparison theorem (see
\cite{her}).\par

\begin{atm}[{\cite[Theorem 5.1]{hkr1}}] \label{73} Let $ f $  be a solution of \eqref{25} in the disc $ D_{R}=\{z\in
\mathbb{C}: |z|<R\} $, $ 0<R<\infty $, let $ n_{c} \in \{1,
\cdot\cdot\cdot, k\} $ be the number of nonzero coefficients $
A_{j}(z) $, $ j=0, 1, 2, \cdots, k-1,$ and let $ \theta\in [0, 2\pi)
$ and $ \varepsilon>0 $. If $ z_{\theta}=\nu e^{i\theta}\in D_{R} $
is such that $ A_{j}(z_{\theta})\neq 0 $ for some $ j=0,\cdots, k-1
$, then, for all $ \nu<r<R $,
\begin{align}|f(re^{i\theta})|\leqslant C\exp
\bigg(n_{c}\int_{\nu}^{r}\max_{j=0,\cdots,
k-1}|A_{j}(te^{i\theta})|^{1/(k-j)}dt\bigg),\end{align} where $ C>0
$ is a constant satisfying
\begin{align} C\leqslant (1+\varepsilon)\max_{j=0,\cdots, k-1}\bigg(\frac{|f^{(j)}(z_{\theta})|}{(n_{c})^{j}
\max_{n=0,\cdots, k-1}|A_{n}(z_{\theta})|^{j/(k-n)}}\bigg).
\end{align}
\end{atm}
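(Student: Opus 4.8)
The plan is to pass from the disc to the ray through $z_\theta$, dominate the restricted solution by that of a real majorant equation via Herold's comparison theorem, and then read off the sharp growth of the majorant. First I would parametrise the segment $\{te^{i\theta}:\nu\le t\le r\}\subset D_R$ and set $g(t)=f(te^{i\theta})$. Since $g^{(m)}(t)=e^{im\theta}f^{(m)}(te^{i\theta})$, restricting \eqref{25} to this segment and multiplying by $e^{ik\theta}$ shows that $g$ solves on $[\nu,r]$ a linear ODE
\[
g^{(k)}+b_{k-1}g^{(k-1)}+\cdots+b_0 g=0,\qquad b_j(t)=e^{i(k-j)\theta}A_j(te^{i\theta}),
\]
whose continuous coefficients satisfy $|b_j(t)|=|A_j(te^{i\theta})|$ and whose data satisfy $|g^{(j)}(\nu)|=|f^{(j)}(z_\theta)|$.

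Put $\omega(t)=\max_{0\le j\le k-1}|A_j(te^{i\theta})|^{1/(k-j)}$ and $S=\{j:A_j\not\equiv 0\}$, so that $|S|=n_c$ and $|b_j(t)|\le\omega(t)^{k-j}$ for every $j$. Herold's comparison theorem \cite{her}, applied on $[\nu,r]$, then furnishes a solution $h$ of the real majorant equation $h^{(k)}=\sum_{j\in S}\omega(t)^{k-j}h^{(j)}$ with $h^{(j)}(\nu)\ge\max\{|f^{(j)}(z_\theta)|,0\}$, for which $|g^{(j)}(t)|\le h^{(j)}(t)$ on $[\nu,r]$; in particular $|f(re^{i\theta})|=|g(r)|\le h(r)$. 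Because the coefficients and the initial data are nonnegative, a standard bootstrap shows that $h,h',\dots,h^{(k-1)}$ are all nonnegative and nondecreasing on $[\nu,r]$. It is essential here that only the indices $j\in S$ occur (the remaining coefficients vanish identically anyway): this is what makes $n_c$, rather than $k$, the relevant multiplier.

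It remains to estimate $h(r)$, which is the heart of the matter. The elementary fact behind the bound is that $n_c\omega$ dominates the instantaneous growth rate: whenever $\mu\ge n_c\omega(t)$ one has $\sum_{j\in S}\omega(t)^{k-j}\mu^j\le\mu^k$, since each of the $n_c$ summands equals $\mu^k(\omega/\mu)^{k-j}\le\mu^k/n_c$. To turn this into an estimate I would rescale the companion system of the majorant equation by the diagonal matrix with entries $1,(n_c\omega(t))^{-1},\dots,(n_c\omega(t))^{-(k-1)}$: the transformed coefficient matrix has $\ell^\infty$ operator norm equal to $n_c\omega(t)$, so a Gronwall estimate for $t\mapsto\max_{0\le j\le k-1}h^{(j)}(t)/(n_c\omega(t))^j$ gives, after absorbing the lower-order contributions of the rescaling into the free parameter $\varepsilon$,
\[
h(r)\;\le\;(1+\varepsilon)\Bigl(\max_{0\le j\le k-1}\frac{h^{(j)}(\nu)}{(n_c\omega(\nu))^j}\Bigr)\exp\!\Bigl(n_c\int_\nu^r\omega(t)\,dt\Bigr).
\]
Choosing $h^{(j)}(\nu)=|f^{(j)}(z_\theta)|$ — the quotients being finite since $\omega(\nu)>0$ by hypothesis — and using $\max_{0\le n\le k-1}|A_n(z_\theta)|^{j/(k-n)}=\omega(\nu)^j$ identifies the constant $C$ with the claimed bound; together with $|f(re^{i\theta})|\le h(r)$ this is the assertion.

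The main obstacle is the last step. The weight $\omega$ is only continuous: being a maximum of moduli of analytic functions it has corners, and it can vanish at isolated points of the ray (those $t$ for which every $A_j(te^{i\theta})=0$) even though $\omega(\nu)>0$. At such points the rescaling by powers of $n_c\omega$ degenerates, and the naive Gronwall exponent acquires an uncontrolled term comparable to $\int\max(-\omega'/\omega,0)$. One therefore has to either replace $\omega$ by a positive regularisation that changes $\int_\nu^r\omega$ by an arbitrarily small amount — possible precisely because the zeros of $\omega$ on $[\nu,r]$ are isolated — or excise small neighbourhoods of those zeros, on which the majorant equation is close to $h^{(k)}=0$ and hence contributes a factor arbitrarily near $1$. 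Controlling these errors, together with the slack already built into Herold's comparison theorem, is exactly what forces the estimate for $C$ to carry the factor $(1+\varepsilon)$ instead of being an equality; by contrast, the reduction to the ray and the numerical lemma are routine.
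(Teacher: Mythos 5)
First, a point of orientation: the paper does not prove Theorem \ref{73} at all --- it is quoted from \cite[Theorem 5.1]{hkr1}, and the only glimpse of that proof the paper gives is the construction recalled before Theorem \ref{70}: one restricts \eqref{25} to the ray, chooses a partition $P$ of $[\nu,\varrho]$ whose upper Riemann sum of $h_\theta$ is within $\varepsilon_0$ of $\int_\nu^\varrho h_\theta\,dt$, majorizes the coefficients by the \emph{piecewise constant} function $\delta_j(g_\theta/n_c)^{k-j}$ built from local suprema as in \eqref{79}, and applies Herold's comparison theorem \cite{her} with the partition points as the finite exceptional set; the factor $(1+\varepsilon)$ is produced by the upper-sum approximation and by $g_\theta(\nu)\geqslant n_ch_\theta(\nu)$. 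Your reduction to the ray, your appeal to Herold, and your numerical inequality $\sum_{j\in S}\omega^{k-j}\mu^{j}\leqslant\mu^{k}$ for $\mu\geqslant n_c\omega$ are the same ingredients, but your replacement of this step-function bookkeeping by a Gronwall estimate for $m(t)=\max_{0\leqslant j\leqslant k-1}h^{(j)}(t)/(n_c\omega(t))^{j}$ is where the argument breaks.

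The step you describe as ``absorbing the lower-order contributions of the rescaling into $\varepsilon$'' is not a technicality, and the difficulty is not confined to the zeros of $\omega$. Differentiating the weight $(n_c\omega(t))^{-j}$ contributes $j(-\omega'/\omega)_{+}m$ to the differential inequality, so what Gronwall actually gives is the exponent $n_c\int_\nu^r\omega\,dt+(k-1)\int_\nu^r(-\omega'/\omega)_{+}dt$; the second integral is the total logarithmic decrease of $\omega$ on $[\nu,r]$, which is not controlled by $\int\omega$ and cannot be made small by any positive regularization of $\omega$ whose integral is close to $\int\omega$. Concretely, take $k=2$, $A_0\equiv0$, and $A_1(z)=M\exp\bigl(-M^{2}e^{-2i\theta}(z-z_\theta)^{2}\bigr)$, so that $\omega(t)=|A_1(te^{i\theta})|=Me^{-M^{2}(t-\nu)^{2}}$ and $A_1(z_\theta)=M\neq0$. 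With $f(z_\theta)=0$, $|f'(z_\theta)|=1$, your majorant solves $h''=\omega h'$ with $h(\nu)=0$, $h'(\nu)=1$; since $h'$ is positive and nondecreasing, $h(r)\geqslant r-\nu$, whereas your asserted bound $(1+\varepsilon)\max\{h(\nu),h'(\nu)/(n_c\omega(\nu))\}\exp\bigl(n_c\int_\nu^r\omega\,dt\bigr)\leqslant(1+\varepsilon)e^{\sqrt{\pi}/2}/M$ tends to $0$ as $M\to\infty$. So the intermediate inequality you need for $h$ is simply false whenever $\omega$ drops after $\nu$, and since $|f|\leqslant h$ is all that Herold leaves you, excising or regularizing isolated zeros of $\omega$ cannot close the gap: what must be tracked is how the normalizing powers are re-adjusted each time the local majorant decreases, which is exactly the partition bookkeeping of \cite{hkr1} that your sketch bypasses. (The same example shows that a constant $C$ depending on the coefficients only through their values at $z_\theta$ is an extremely delicate matter, so this is genuinely the heart of the proof and not a step that can be waved through.)
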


In this paper, we consider the nonlinear differential equation
\begin{align}\label{19}
(f^{(k)})^{n_{k}}+A_{k-1}(z)(f^{(k-1)})^{n_{k-1}}+\cdots+A_{1}(z)(f')^{n_{1}}+A_{0}(z)(f)^{n_{0}}=0,
\end{align}
where coefficients are analytic in $\mathbb{D} $, and $ n_{j}\in
R^{+} $ for all $ j=0, 1, \cdots, k $. We study it from two aspects.
On one hand, we find sufficient conditions on the coefficients of
the differential equation such that all solutions of \eqref{19}
belong to  the $ Q_{K} $ space. On the other hand, we find growth
estimate for solutions of \eqref{19}, if the  coefficients  belong
to some analytic function spaces. More precisely, in section
\ref{71}, the main results generalize Theorems \ref{32} and \ref{47}
to the case of nonlinear differential equation \eqref{19}. In
section \ref{72}, we generalize   Theorem \ref{73} to the case of
the nonlinear differential equation and give some estimates for
solutions of \eqref{19} by the coefficients, and then we  give the
growth estimate of solutions as the coefficients belong to some
given function spaces, which can be seen as the inverse of the
results in Section \ref{71}.

In this paper, the letter $ C $ denotes a positive constant through
the paper which may vary at each occurrence.

\section{Constraints on coefficients}\label{71}
In this section, we generalize Theorems \ref{32} and \ref{47} to the
case of nonlinear differential equation \eqref{19}. For this
purpose, we need  the following result ({see \cite{wuzhu}}).\par

\begin{lemma1235}\label{39} Suppose $ K $ satisfies \eqref{22} or
\eqref{43}. Then for any positive integer  $ k $ and any $ f \in H(
\mathbb{D}) $ we have $f \in Q_{K} $ if and only if
\begin{align}\label{38} \sup_{a\in\mathbb{D}}\int_{\mathbb{D}}|f^{(k)}(z)|^{2}(1-|z|^{2})^{2k-2}
K(1-|\varphi_{a}(z)|^{2}) d\sigma(z)<\infty.
\end{align}
\end{lemma1235}

\begin{re}
Note that by the proofs in \cite{wuzhu},  the $ Q_{K} $ norm and the
area integral in Lemma \ref{39} are comparable; that is, there
exists a constant $ C > 0 $, independent of $ f $, such that
\begin{align} \label{40}
C^{-1}\|f\|_{Q_{K}}^{2}\leqslant \sup_{a\in
\mathbb{D}}\int_{\mathbb{D}}|f^{(k)}(z)|^{2}(1-|z|^{2})^{2k-2}K(1-|\varphi_{a}(z)|^{2})d\sigma(z)\leqslant
C\|f\|_{Q_{K}}^{2}.
\end{align}
\end{re}

\begin{theorem}\label{20} Suppose  $ n_{k}\geqslant n_{j}>1 $ for all $ j=0, 1, \cdots, k-1 $.
Let $ 1<c<3/2 $ and let $ K $ satisfy \eqref{22}. There exists a
positive constant $ \alpha $ depending only on $ k $, $ c $, $ n_{k}
$ and $ K $, such that if the coefficients of \eqref{19} are
analytic in $ \mathbb{D} $ and satisfy the following
\begin{align} &\|A_{j}\|_{H_{k-j}^{\infty}}=\sup_{z\in\mathbb{D}}|A_{j}(z)|(1-|z|^{2})^{n_{k}(k-j)}
\leqslant \alpha,\,j=1,\cdot\cdot\cdot,k-1, \label{41}
\shortintertext{and}
&\|A_{0}\|_{H_{k-c}^{\infty}}=\sup_{z\in\mathbb{D}}|A_{0}(z)|(1-|z|^{2})^{n_{k}(k-c)}\leqslant
\alpha, \label{42} \end{align} then all solutions of \eqref{19}
belong to the  $ Q_{K} $ space.
\end{theorem}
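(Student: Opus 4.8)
The plan is to reduce the nonlinear equation \eqref{19} to a pointwise estimate on $|f^{(k)}(z)|$ in terms of the lower-order derivatives, and then to run the standard iteration/fixed-point machinery that underlies Theorems \ref{32} and \ref{47}, but adapted to the weighted norms appearing in \eqref{41}--\eqref{42}. First I would solve \eqref{19} for $(f^{(k)})^{n_k}$, take absolute values, and use $n_k\geqslant n_j>1$ to write
\begin{align}\label{pp1}
|f^{(k)}(z)|^{n_k}\leqslant\sum_{j=0}^{k-1}|A_j(z)|\,|f^{(j)}(z)|^{n_j}.
\end{align}
Since we ultimately want to place $f$ in $Q_K$, and by Lemma \ref{39} (together with \eqref{40}) this is equivalent to controlling the area integral \eqref{38}, the natural strategy is to show that under \eqref{41}--\eqref{42} with $\alpha$ small, every solution satisfies a Bloch-type bound $|f^{(j)}(z)|(1-|z|^2)^{j}\leqslant M$ for $j=0,\dots,k$, and more precisely that $f^{(k)}$ lies in the relevant weighted space so that \eqref{38} is finite. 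The exponents in \eqref{41}--\eqref{42} are exactly chosen so that, after raising \eqref{pp1} to the power $1/n_k$ and inserting the weights, the contribution of each term $A_j(f^{(j)})^{n_j}$ scales like $(1-|z|^2)^{-k}$ (for $j\geqslant 1$) or like $(1-|z|^2)^{-c}$ times a Bloch factor (for $j=0$); this is what makes the argument of Theorem \ref{32} go through.

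The key steps, in order, are: (i) Establish the a priori growth estimate. Fix a solution $f$ and, on the ball $|z|\leqslant r$, use the equation \eqref{pp1} together with the Cauchy-type estimates relating $f^{(j)}$ to $f^{(j+1)}$ to set up a Gronwall/bootstrap inequality for the quantity $\max_{0\leqslant j\leqslant k}\sup_{|z|\leqslant r}|f^{(j)}(z)|(1-|z|^2)^{j}$; smallness of $\alpha$ guarantees the bootstrap closes and yields a finite bound $M=M(f,\alpha)$ independent of $r$. This gives $f\in\mathfrak{B}^{k}=H^\infty_{k-1}$-type control on all derivatives. (ii) Feed this bound back into \eqref{pp1}: for $1\leqslant j\leqslant k-1$, $|A_j(z)||f^{(j)}(z)|^{n_j}\leqslant \alpha (1-|z|^2)^{-n_k(k-j)}\cdot M^{n_j}(1-|z|^2)^{-n_j j}$, and using $n_j\leqslant n_k$ one checks $n_k(k-j)+n_j j\leqslant n_k k$; for $j=0$, $|A_0(z)||f(z)|^{n_0}\leqslant\alpha(1-|z|^2)^{-n_k(k-c)}M^{n_0}$. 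Taking the $1/n_k$ power gives
\begin{align}\label{pp2}
|f^{(k)}(z)|(1-|z|^2)^{k-1}\leqslant C\alpha^{1/n_k}\Big((1-|z|^2)^{-1}\big(\text{Bloch factor}\big)+(1-|z|^2)^{c-1}\Big)^{1/n_k},
\end{align}
i.e. $|f^{(k)}(z)|^2(1-|z|^2)^{2k-2}$ is dominated by $C(1-|z|^2)^{2(c-1)/n_k}$ plus a term already controlled by the $Q_K$-norm of a Bloch function. (iii) Insert this into the area integral \eqref{38} and invoke the hypothesis \eqref{22} on $K$ exactly as in the proof of Theorem \ref{32}: the integral $\sup_a\int_{\mathbb D}(1-|z|^2)^{2(c-1)/n_k}K(1-|\varphi_a(z)|^2)\,d\sigma(z)$ is finite precisely because $1<c<3/2$ (so $2(c-1)/n_k$ is a small positive exponent) and because condition \eqref{22} controls the growth of $K$; a change of variables $w=\varphi_a(z)$ and the definition of $\varphi_K$ reduce it to $\int_1^\infty \varphi_K(s)s^{-(2c-1)}\,ds<\infty$ up to constants. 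Then Lemma \ref{39} gives $f\in Q_K$.

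The main obstacle I expect is step (i): the a priori estimate. In the \emph{linear} case one can iterate the equation directly because $f^{(k)}$ appears linearly and one integrates along radii (as in Theorem \ref{73}); here the powers $n_j$ mean that after raising to the $1/n_k$ power one gets sums of $(k-j)$-th roots of products, and one must verify that the elementary inequality $(\sum a_j)^{1/n_k}\leqslant\sum a_j^{1/n_k}$ (valid since $n_k>1$) together with the exponent arithmetic $n_k(k-j)+jn_j\leqslant n_k k$ genuinely closes the bootstrap for small $\alpha$ — this requires the hypothesis $n_k\geqslant n_j>1$ in an essential way (the strict inequality $n_j>1$ is what keeps the Bloch factors at positive powers and prevents logarithmic blow-up). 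A secondary technical point is making the passage from \eqref{pp2} to the area integral uniform in $a\in\mathbb D$; this is handled by the same $\varphi_K$-estimate used in \cite{hw,wuzhu} and is not new. Once the bootstrap is set up correctly, the rest follows the template of Theorem \ref{32} with $k-j$ replaced by $n_k(k-j)$ throughout.
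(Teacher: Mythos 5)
Your overall strategy diverges from the paper's and, as written, has a genuine gap at its core. The paper proves no pointwise a priori bound: it works with the dilations $f_{\rho}(z)=f(\rho z)$ (so that every $Q_K$-type area integral is finite from the outset), substitutes the equation into the $k$-th order area integral of Lemma \ref{39}, splits off the $j=0$ term, applies H\"older's inequality with respect to the measure $(1-|z|^2)^{2j-2}K(1-|\varphi_a(z)|^2)\,d\sigma(z)$ to reduce the exponents $2n_j/n_k$ to $2$, identifies the resulting integrals with $\|f_{\rho}\|_{Q_K}^2$ via Lemma \ref{39} and \eqref{40}, and finally \emph{absorbs} $\|f_{\rho}\|_{Q_K}^2$ into the left-hand side; the smallness of $\alpha$ is used exactly there, and then $\rho\to1$.

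The gap in your outline is twofold. First, your bootstrap in step (i) cannot give $|f(z)|\le M$: integrating $|f^{(k)}(z)|(1-|z|^2)^{k}\le N$ down to order zero produces only $|f(z)|\lesssim\log\frac{2}{1-|z|}$, so the $j=0$ estimate in step (ii) is not what the bootstrap delivers (this part is repairable, since $c>1$ absorbs the logarithm). The fatal problem is the terms $1\le j\le k-1$: the exponent arithmetic $n_k(k-j)+jn_j\le n_kk$ holds with \emph{equality} when $n_j=n_k$, so the pointwise bound you obtain is only $|f^{(k)}(z)|^{2}(1-|z|^2)^{2k-2}\lesssim(1-|z|^2)^{-2}$, and $\sup_{a}\int_{\mathbb D}(1-|z|^2)^{-2}K(1-|\varphi_a(z)|^2)\,d\sigma(z)$ is in general infinite; a bound of this type is exactly the statement $f\in\mathfrak B$, which is strictly weaker than $f\in Q_K$. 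Your phrase ``a term already controlled by the $Q_K$-norm of a Bloch function'' conceals a circularity: to bound $\sup_a\int_{\mathbb D}|f^{(j)}|^{2n_j/n_k}(1-|z|^2)^{2j-2}K(1-|\varphi_a(z)|^2)\,d\sigma$ by a power of $\|f\|_{Q_K}$ you must already know $\|f\|_{Q_K}<\infty$, which is the conclusion. This is precisely why the paper introduces $f_{\rho}$ and closes with the absorption inequality $(1-k^{2/n_k}\alpha^{2/n_k}C)\|f_{\rho}\|_{Q_K}^2\le k^{2/n_k}\alpha^{2/n_k}C|f(0)|^{2n_0/n_k}$. Without the dilation-and-absorption mechanism (or a substitute for it), your outline does not reach $Q_K$; only your treatment of the $j=0$ term via \eqref{22} and $1<c<3/2$ matches the actual argument.
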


\begin{proof} Let $ f $ be a non-trivial solution of \eqref{19}, and let $ f_{\rho}(z)=f(\rho z) $ and
$ A_{j, \rho}(z)=A_{j}(\rho z) $, where $ 1/2\leqslant \rho < 1 $.
Then
  \begr\label{52}
&&\int_{\mathbb{D}}|f_{\rho}^{(k)}(z)|^{2} (1-|z|^{2})^{2k-2}K(1-|\varphi_{a}(z)|^{2}) d\sigma(z)\notag \\
&=&\int_{\mathbb{D}}\bigg|\sum_{j=0}^{k-1}A_{j, \rho}
(z)\rho^{kn_{k}-jn_{j}}\big(f_{\rho}^{(j)}(z)\big)^{n_{j}}\bigg|^{2/n_{k}}(1-|z|^{2})^{2k-2}K(1-|\varphi_{a}(z)|^{2})
d\sigma(z)\notag\\&\leqslant&
k^{2/n_{k}}\int_{\mathbb{D}}\sum_{j=0}^{k-1}|A_{j,
\rho}(z)|^{2/n_{k}}|f_{\rho}^{(j)}(z)|^{2n_{j}/n_{k}}
(1-|z|^{2})^{2k-2}K(1-|\varphi_{a}(z)|^{2}) d\sigma(z)\notag
\endr
\begr &\leqslant&
k^{2/n_{k}}\alpha^{2/n_{k}}\sum_{j=1}^{k-1}\int_{\mathbb{D}}|f_{\rho}^{(j)}(z)|^{2n_{j}/n_{k}}
(1-|z|^{2})^{2j-2}K(1-|\varphi_{a}(z)|^{2}) d\sigma(z)\notag\\
&\quad&
+k^{2/n_{k}}\alpha^{2/n_{k}}\int_{\mathbb{D}}|f_{\rho}(z)|^{2n_{0}/n_{k}}
(1-|z|^{2})^{2c-2}K(1-|\varphi_{a}(z)|^{2}) d\sigma(z)\notag\\&:=&
k^{2/n_{k}}\alpha^{2/n_{k}}(J_{1}+J_{2}).
\endr
For $ J_{1} $, by H\"{o}lder inequality with the measure $(1-|z|^{2})^{2j-2}K(1-|\varphi_{a}(z)|^{2}) d\sigma(z) $, we have
\begin{align*}
J_{1}&=\sum_{j=1}^{k-1}\int_{\mathbb{D}}|f_{\rho}^{(j)}(z)|^{2n_{j}/n_{k}}(1-|z|^{2})^{2j-2}K(1-|\varphi_{a}(z)|^{2}) d\sigma(z)\\
&\leqslant
C\sum_{j=1}^{k-1}\bigg(\int_{\mathbb{D}}|f_{\rho}^{(j)}(z)|^{2}
(1-|z|^{2})^{2j-2}K(1-|\varphi_{a}(z)|^{2})
d\sigma(z)\bigg)^{n_{j}/n_{k}}.
\end{align*}
By Lemma \ref{39} we have $J_{1} \leqslant C\|f_{\rho}\|^{2n_{j}/n_{k}}_{Q_{K}}.$ Without loss of generality, we can assume that $\|f_{\rho}\|_{Q_{K}}>1, $ and it follows that
\begin{align}
J_{1} \leqslant C\|f_{\rho}\|^{2}_{Q_{K}}.
\end{align}
For $ J_{2} $, by H\"{o}lder inequality with measure $(1-|z|^{2})^{2c-2}K(1-|\varphi_{a}(z)|^{2}) d\sigma(z) $, we have
\begin{align*}
J_{2}&=\int_{\mathbb{D}}|f_{\rho}(z)|^{2n_{0}/n_{k}}(1-|z|^{2})^{2c-2}K(1-|\varphi_{a}(z)|^{2}) d\sigma(z)\\
&\leqslant
C\bigg(\int_{\mathbb{D}}|f_{\rho}(z)|^{2}(1-|z|^{2})^{2c-2}K(1-|\varphi_{a}(z)|^{2})
d\sigma(z)\bigg)^{n_{0}/n_{k}}.
\end{align*}
Then using the reasoning used in \cite[Theorem 2.1]{hw} we have
\begr
J_{2} \leqslant  C(\|f\|^{2}_{Q_{K}}+|f(0)|^{2})^{n_{0}/n_{k}} \leqslant C(\|f_{\rho}\|^{2n_{0}/n_{k}}_{Q_{K}}+|f(0)|^{2n_{0}/n_{k}}).\nonumber
\endr
Without loss of generality, we can assume that $\|f_{\rho}\|_{Q_{K}}>1, $ for otherwise the conclusion is clearly
established. It follows that
 \begr
 J_{2} \leqslant C\|f_{\rho}\|^{2}_{Q_{K}}+C|f(0)|^{2n_{0}/n_{k}}.
\endr
Combining \eqref{52} with the estimates for $ J_{1} $ and $ J_{2} $,
we have
\begin{align}
(1-k^{2/n_{k}}\alpha^{2/n_{k}}C)\|f_{\rho}\|^{2}_{Q_{K}}\leqslant
k^{2/n_{k}}\alpha^{2/n_{k}}C|f(0)|^{2n_{0}/n_{k}},
\end{align}
where the constant $ C $ depends only on $ k $, $ c $, $ n_{k} $ and
$ K $. The conclusion $ f\in Q_{K} $ follows by choosing $ \alpha $
sufficiently small and letting $ \rho\rightarrow 1 $. The proof is
complete.
\end{proof}

\begin{theorem}\label{53}
Suppose $ n_{k}\geqslant n_{j}>1 $ for all $ j=0, 1, \cdots, k-1 $.
Let the nondecreasing function $ K $ satisfy \eqref{43}. There
exists a positive constant $ \beta $ depending only on $ k $, $n_{k}
$ and $ K $ such that if the coefficients of \eqref{19} satisfy
\begin{align}
&\|A_{j}\|_{H_{k-j}^{\infty}}=\sup_{z\in\mathbb{D}}|A_{j}(z)|(1-|z|^{2})^{n_{k}(k-j)}
\leqslant \beta,\,\,j=1,\cdot\cdot\cdot,k-1, \shortintertext{and}
&\|A_{0}\|_{H_{k-1}^{\infty}}=\sup_{z\in\mathbb{D}}|A_{0}(z)|(1-|z|^{2})^{n_{k}(k-1)}\leqslant
\beta, \end{align}
 then all solutions of \eqref{19} belong to the $Q_{K} $ space.
\end{theorem}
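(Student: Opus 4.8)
The plan is to run the proof of Theorem~\ref{20} essentially unchanged, setting the exponent $c$ equal to $1$ everywhere and, for the lowest–order term, replacing the appeal to \cite[Theorem 2.1]{hw} by the corresponding estimate from the proof of \cite[Theorem 2.6]{hw}. Let $f$ be a non-trivial solution of \eqref{19}; for $1/2\leqslant\rho<1$ put $f_\rho(z)=f(\rho z)$ and $A_{j,\rho}(z)=A_j(\rho z)$, so that it suffices to bound $\|f_\rho\|_{Q_K}$ uniformly in $\rho$ and then let $\rho\to 1$. Solving \eqref{19} for $(f^{(k)})^{n_k}$, dilating by $\rho$, taking the $2/n_k$-th power of the modulus, using $\bigl|\sum_{j=0}^{k-1}a_j\bigr|^{2/n_k}\leqslant k^{2/n_k}\sum_{j=0}^{k-1}|a_j|^{2/n_k}$ together with $\rho^{kn_k-jn_j}\leqslant 1$ (valid since $jn_j\leqslant(k-1)n_k<kn_k$), and integrating against $(1-|z|^2)^{2k-2}K(1-|\varphi_a(z)|^2)\,d\sigma(z)$, the normalizations $\|A_j\|_{H^\infty_{k-j}}\leqslant\beta$ for $1\leqslant j\leqslant k-1$ and $\|A_0\|_{H^\infty_{k-1}}\leqslant\beta$ make the powers of $(1-|z|^2)$ cancel exactly as in \eqref{52}: the $j$-th term with $1\leqslant j\leqslant k-1$ picks up weight $(1-|z|^2)^{2j-2}$, while the $j=0$ term now picks up weight $(1-|z|^2)^{0}=1$ instead of the $(1-|z|^2)^{2c-2}$ in \eqref{52}. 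This would give
\[
\int_{\mathbb D}|f_\rho^{(k)}(z)|^2(1-|z|^2)^{2k-2}K(1-|\varphi_a(z)|^2)\,d\sigma(z)\leqslant k^{2/n_k}\beta^{2/n_k}(J_1+J_2),
\]
with $J_1=\sum_{j=1}^{k-1}\int_{\mathbb D}|f_\rho^{(j)}|^{2n_j/n_k}(1-|z|^2)^{2j-2}K(1-|\varphi_a(z)|^2)\,d\sigma$ and $J_2=\int_{\mathbb D}|f_\rho|^{2n_0/n_k}K(1-|\varphi_a(z)|^2)\,d\sigma$.

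For $J_1$ I would argue exactly as in Theorem~\ref{20}: H\"older's inequality with exponent $n_k/n_j\geqslant 1$ against the finite measure $(1-|z|^2)^{2j-2}K(1-|\varphi_a(z)|^2)\,d\sigma$ (finite since $2j-2\geqslant 0$ and $K(1-|\varphi_a(z)|^2)\leqslant K(1)$), followed by Lemma~\ref{39} (applicable because $K$ satisfies \eqref{43}), gives $J_1\leqslant C\|f_\rho\|_{Q_K}^{2n_j/n_k}$, and assuming—as we may, the contrary case being trivial—that $\|f_\rho\|_{Q_K}>1$, this becomes $J_1\leqslant C\|f_\rho\|_{Q_K}^2$. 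For $J_2$, H\"older's inequality with exponent $n_k/n_0$ reduces matters to $\int_{\mathbb D}|f_\rho(z)|^2K(1-|\varphi_a(z)|^2)\,d\sigma(z)$, and here condition \eqref{43} enters precisely as in \cite[Theorem 2.6]{hw}: the Dini-type hypothesis \eqref{43} forces $K(1-|\varphi_a(\cdot)|^2)\,d\sigma$ to be a bounded ($Q_K$-Carleson) measure, uniformly in $a\in\mathbb D$, so that $\int_{\mathbb D}|f_\rho|^2K(1-|\varphi_a|^2)\,d\sigma\leqslant C\bigl(\|f_\rho\|_{Q_K}^2+|f(0)|^2\bigr)$; taking the $(n_0/n_k)$-th power and using $\|f_\rho\|_{Q_K}>1$ again yields $J_2\leqslant C\|f_\rho\|_{Q_K}^2+C|f(0)|^{2n_0/n_k}$ with $C=C(k,n_k,K)$.

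Feeding these two bounds into the displayed inequality and applying the lower estimate of \eqref{40} from the Remark after Lemma~\ref{39}, one reaches
\[
\bigl(1-k^{2/n_k}\beta^{2/n_k}C\bigr)\|f_\rho\|_{Q_K}^2\leqslant k^{2/n_k}\beta^{2/n_k}C\,|f(0)|^{2n_0/n_k},
\]
with $C$ depending only on $k,n_k,K$. Choosing $\beta$ so small that $1-k^{2/n_k}\beta^{2/n_k}C\geqslant 1/2$, the right-hand side is independent of $\rho$, so $\sup_{1/2\leqslant\rho<1}\|f_\rho\|_{Q_K}<\infty$, and letting $\rho\to 1$ (Fatou's lemma) gives $f\in Q_K$. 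The only ingredient that is not a mechanical transcription of the proof of Theorem~\ref{20} is the $J_2$ estimate—that is, verifying that \eqref{43} forces $K(1-|\varphi_a(z)|^2)\,d\sigma(z)$ to be a $Q_K$-Carleson measure with constant independent of $a$ (and of the dilation parameter $\rho$)—and this is exactly where I expect the real work to lie; it is, however, precisely the content already used in \cite[Theorem 2.6]{hw}.
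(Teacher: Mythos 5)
Your proposal is correct and follows the paper's own proof essentially step for step: the same dilation $f_\rho$, the same splitting into $T_1$ (your $J_1$) handled by H\"older plus Lemma~\ref{39}, and the same treatment of the lowest-order term via the estimate $\int_{\mathbb D}|f_\rho|^2K(1-|\varphi_a|^2)\,d\sigma\leqslant C\|f_\rho\|_{Q_K}^2+2|f(0)|^2$ borrowed from \cite[Theorem 2.6]{hw}, followed by absorbing $\|f_\rho\|_{Q_K}^2$ for small $\beta$ and letting $\rho\to1$. No substantive differences to report.
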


\begin{proof} Denote $ f_{\rho}(z)=f(\rho z) $ and $ A_{j,\rho}(z)=A_{j}(\rho z)$
for $ 1/2\leqslant \rho<1 $. By Lemma \ref{39} we have the following
  \begr
  \label{54}&&\int_{\mathbb{D}}|f_{\rho}^{(k)}(z)|^{2}(1-|z|^{2})^{2k-2}K(1-|\varphi_{a}(z)|^{2})d\sigma(z)\nonumber\\
&=&\int_{\mathbb{D}}\bigg|\sum_{j=0}^{k-1}A_{j,\rho}
(z)\rho^{kn_{k}-jn_{j}}\big(f_{\rho}^{(j)}(z)\big)^{n_{j}}\bigg|^{2/n_{k}}(1-|z|^{2})^{2k-2}K(1-|\varphi_{a}(z)|^{2})d\sigma(z)\nonumber\\
&\leqslant&
k^{2/n_{k}}\int_{\mathbb{D}}\sum_{j=0}^{k-1}|A_{j,\rho}(z)|^{2/n_{k}}|f_{\rho}^{(j)}(z)|^{2n_{j}/n_{k}}
(1-|z|^{2})^{2k-2}K(1-|\varphi_{a}(z)|^{2})d\sigma(z)\nonumber\\
 &\leqslant& k^{2/n_{k}}\beta^{2/n_{k}}\sum_{j=1}^{k-1}\int_{\mathbb{D}}|f_{\rho}^{(j)}(z)|^{2n_{j}/n_{k}}
(1-|z|^{2})^{2j-2}K(1-|\varphi_{a}(z)|^{2})
d\sigma(z)\nonumber\\
&&\quad+k^{2/n_{k}}\beta^{2/n_{k}}\int_{\mathbb{D}}|f_{\rho}(z)|^{2n_{0}/n_{k}}K(1-|\varphi_{a}(z)|^{2})
d\sigma(z)\nonumber\\
 &\coloneqq& k^{2/n_{k}}\beta^{2/n_{k}}(T_{1}+T_{2}).
  \endr
 For $ T_{1}$, by H\"{o}lder inequality with the measure $
(1-|z|^{2})^{2j-2}K(1-|\varphi_{a}(z)|^{2})d\sigma(z) $, we have
\begin{align*}T_{1}&=
\sum_{j=1}^{k-1}\int_{\mathbb{D}}|f_{\rho}^{(j)}(z)|^{2n_{j}/n_{k}}
(1-|z|^{2})^{2j-2}K(1-|\varphi_{a}(z)|^{2})d\sigma(z)\nonumber\\&\leqslant
C\sum_{j=1}^{k-1}\bigg(\int_{\mathbb{D}}|f_{\rho}^{(j)}(z)|^{2}
(1-|z|^{2})^{2j-2}K(1-|\varphi_{a}(z)|^{2})
d\sigma(z)\bigg)^{n_{j}/n_{k}}.
\end{align*}
 By Lemma \ref{39}, we have $ T_{1} \leqslant C \|f_{\rho}\|^{2n_{j}/n_{k}}_{Q_{K}}. $ 
Without loss of generality, we can assume that $\|f_{\rho}\|_{Q_{K}}>1, $ and it follows that
\begin{align}\label{55}
T_{1} \leqslant C\|f_{\rho}\|^{2}_{Q_{K}}.
\end{align}
For $ T_{2} $, by using the H\"{o}lder inequality with the measure $
K(1-|\varphi_{a}(z)|^{2}) d\sigma(z) $, we have \begr
T_{2}&=&\int_{\mathbb{D}}|f_{\rho}(z)|^{2n_{0}/n_{k}}
K(1-|\varphi_{a}(z)|^{2}) d\sigma(z)\nonumber\\
&\leqslant&
\bigg(\int_{\mathbb{D}}|f_{\rho}(z)|^{2}K(1-|\varphi_{a}(z)|^{2})
d\sigma(z)\bigg)^{n_{0}/n_{k}}.\nonumber
\endr
Then, applying the reasoning used in \cite[Theorem 2.6]{hw} to $T_{2} $, we obtain the following \begin{align}
T_{2} \leqslant
\bigg(C\|f_{\rho}\|_{Q_{K}}^{2}+2|f(0)|^{2}\bigg)^{n_{0}/n_{k}} \leqslant
C\|f_{\rho}\|_{Q_{K}}^{2n_{0}/n_{k}}+2^{n_{0}/n_{k}}|f(0)|^{2n_{0}/n_{k}}.\nonumber
\end{align}
Without loss of generality, we can assume that $
\|f_{\rho}\|_{Q_{K}}>1, $ for otherwise the conclusion is clearly
established. This yields that
\begin{align}\label{56}
T_{2} \leqslant C\|f_{\rho}\|_{Q_{K}}^{2}+2^{n_{0}/n_{k}}|f(0)|^{2n_{0}/n_{k}}.
\end{align}
Combining \eqref{54}, \eqref{55} and \eqref{56} we have
\begin{align}
(1-k^{2/n_{k}}\beta^{2/n_{k}}C)\|f_{\rho}\|^{2}_{Q_{K}}\leqslant
2^{n_{0}/n_{k}}k^{2/n_{k}}\beta^{2/n_{k}}|f(0)|^{2n_{0}/n_{k}},\nonumber
 \end{align}
where the constant $ C $ depends only on $ k $, $ n_{k} $ and  $ K$. The conclusion $ f\in Q_{K} $ follows by choosing $
\beta $ sufficiently small and letting $ \rho\rightarrow 1 $. The
proof is complete.
\end{proof}

\section{Constraints on the solutions}\label{72}
 In this section, we find the growth estimate for the solutions of \eqref{19} if the coefficients of \eqref{19} belong
to some analytic function spaces. First, we extend Herold's
comparison theorem in \cite[Satz 1]{her} or \cite[Theorem H]{hkr1}
to the case of nonlinear differential equation.

\begin{lemma1235} \label{69} Suppose $ n_{0}>1 $, $ a\in [0, 1) $. Let $ A_{j}(x) $, $ j=0,
\cdots, k-1 $ be complex valued functions defined on $ [a, 1) $. Let
$ E\subset [a, 1) $ be a finite set of points, and let $ B_{j}(x) $,
$ j=0, \cdots, k-1 $ be real valued non-negative functions such that
$ |A_{k-j}(x)|\leqslant n_{0}^{-j}B_{k-j}(x) $ for all $ x \in [a,
1)\backslash E $ and  $ j=1, 2, \cdots, k. $ Moreover, let $
|A_{j}(x)| $ be continuous for all $ x \in [a, 1)\backslash E $. If
$ v(x) $ is a solution of the differential equation
\begin{align}\label{58}
&(v^{(k)}(x))^{n_{0}}-\sum_{j=1}^{k}A_{k-j}(x)(v^{(k-j)}(x))^{n_{0}}=0,
\shortintertext{and $ u(x) $ satisfies}
&u^{(k)}(x)-\sum_{j=1}^{k}B_{k-j}(x)u^{(k-j)}(x)=0\label{59}
\end{align}
on $ [a, 1)\backslash E $, where $ |v^{(k-j)}(a)|^{n_{0}}\leqslant
u^{(k-j)}(a) $ for all $ j=1, \cdots, k $, then $$
|v^{(j)}(x)|^{n_{0}}\leqslant n_{0}^{k-j}u^{(j)}(x) $$ on $ [a,
1)\backslash E $ for all $ j=0, 1, \cdots, k $.
\end{lemma1235}

\begin{proof} Without loss of generality, for convenience this Lemma is proved
by taking $ n_{0}=2 $ and $ a=0 $ such that $
|v^{(k-j)}(0)|^{n_{0}}\leqslant u^{(k-j)}(0) $ for all $ j=1,
\cdots, k $. \par

After some calculations, we have
\begin{align}
|v^{(k-j)}(x)|\leqslant
\sum_{m=1}^{j}|v^{(k-m)}(0)|\frac{x^{j-m}}{(j-m)!}+\int_{0}^{x}\frac{(x-s)^{j-1}}{(j-1)!}|v^{(k)}(s)|ds,\label{60}
\end{align}
and so
\begin{align}
|v^{(k-j)}(x)|^{2}&\leqslant
2^{j}\sum_{m=1}^{j}|v^{(k-m)}(0)|^{2}\bigg|\frac{x^{j-m}}{(j-m)!}\bigg|^{2}+2^{j}\int_{0}^{x}\bigg|\frac{(x-s)^{j-1}}{(j-1)!}
\bigg|^{2}|v^{(k)}(s)|^{2}ds\nonumber\\
&\leqslant
2^{j}\sum_{m=1}^{j}|v^{(k-m)}(0)|^{2}\frac{x^{j-m}}{(j-m)!}+2^{j}\int_{0}^{x}\frac{(x-s)^{j-1}}{(j-1)!}
|v^{(k)}(s)|^{2}ds.\label{57}
\end{align}
By \eqref{58} we have
   \begin{align*}|v^{(k)}(x)|^{2}\leqslant
\sum_{j=1}^{k}|A_{k-j}(x)||v^{(k-j)}|^{2},\end{align*}
 which,  together with \eqref{57}, shows
    \begin{align}\label{61}
|v^{(k)}(x)|^{2} &\leqslant  \sum_{j=1}^{k}|A_{k-j}(x)||v^{(k-j)}(x)|^{2}\notag\\
& \leqslant
\sum_{j=1}^{k}2^{j}|A_{k-j}(x)|\sum_{m=1}^{j}|v^{(k-m)}(0)|^{2}\frac{x^{j-m}}{(j-m)!}\nonumber\\
& ~~~~~+
\sum_{j=1}^{k}2^{j}|A_{k-j}(x)|\int_{0}^{x}\frac{(x-s)^{j-1}}{(j-1)!}
|v^{(k)}(s)|^{2}ds.
\end{align}
On the other hand, similar to \eqref{60} we have
\begin{align}
u^{(k-j)}(x)=\sum_{m=1}^{j}u^{(k-m)}(0)\frac{x^{j-m}}{(j-m)!}+\int_{0}^{x}\frac{(x-s)^{j-1}}{(j-1)!}u^{(k)}(s)ds,\label{62}
\end{align}
and so
\begin{align}
2^{j}u^{(k-j)}(x)=2^{j}\sum_{m=1}^{j}u^{(k-m)}(0)\frac{x^{j-m}}{(j-m)!}+2^{j}\int_{0}^{x}\frac{(x-s)^{j-1}}{(j-1)!}u^{(k)}(s)ds.\label{63}
\end{align}
Furthermore, taking \eqref{63} into \eqref{59} yields
\begin{align} \label{64} u^{(k)}(x) &=\sum_{j=1}^{k}B_{k-j}(x)u^{(k-j)}(x)=\sum_{j=1}^{k}2^{-j}B_{k-j}(x)2^{j}u^{(k-j)}(x)
 \notag\\
& = \sum_{j=1}^{k}2^{-j}B_{k-j}(x)2^{j}\sum_{m=1}^{j}u^{(k-m)}(0)\frac{x^{j-m}}{(j-m)!}\nonumber\\
&
\quad+\sum_{j=1}^{k}2^{-j}B_{k-j}(x)2^{j}\int_{0}^{x}\frac{(x-s)^{j-1}}{(j-1)!}u^{(k)}(s)ds.
\end{align}
Then by \eqref{61} and \eqref{64} we obtain 
   \begr
& &|v^{(k)}(x)|^{2}-u^{(k)}(x)\nonumber\\
&
\leqslant&\sum_{j=1}^{k}|A_{k-j}(x)||v^{(k-j)}(x)|^{2}-\sum_{j=1}^{k}B_{k-j}(x)u^{(k-j)}(x)\notag\\
 &
=&\sum_{j=1}^{k}2^{j}\bigg(|A_{k-j}(x)|\sum_{m=1}^{j}|v^{(k-m)}(0)|^{2}\frac{x^{j-m}}
{(j-m)!}-2^{-j}B_{k-j}(x)\sum_{m=1}^{j}u^{(k-m)}(0)\frac{x^{j-m}}{(j-m)!}\bigg)\notag\\
& &
+\sum_{j=1}^{k}2^{j}\bigg(|A_{k-j}(x)|\int_{0}^{x}\frac{(x-s)^{j-1}}{(j-1)!}
|v^{(k)}(s)|^{2}ds-2^{-j}B_{k-j}(x)\int_{0}^{x}\frac{(x-s)^{j-1}}{(j-1)!}u^{(k)}(s)ds\bigg).\notag
 \endr
 Since $$ |A_{k-j}(x)|\leqslant 2^{-j}B_{k-j}(x) $$
 for all $ x \in
[a, 1)\backslash E $ and  $ j=1, 2, \cdots, k $, and $
|v^{(k-j)}(0)|^{2}\leqslant u^{(k-j)}(0) $ for all $ j=1, \cdots, k
$, we get 
 \begr
 &&|v^{(k)}(x)|^{2}-u^{(k)}(x)\nonumber\\
 & \leqslant&\sum_{j=1}^{k}|A_{k-j}(x)||v^{(k-j)}(x)|^{2}-\sum_{j=1}^{k}B_{k-j}(x)u^{(k-j)}(x)\notag\\
&\leqslant&
\sum_{j=1}^{k}B_{k-j}(x)\int_{0}^{x}\frac{(x-s)^{j-1}}{(j-1)!}(|v^{(k)}(s)|^{2}-u^{(k)}(s))ds\nonumber\\
&=&\int_{0}^{x}\bigg(\sum_{j=1}^{k}\frac{(x-s)^{j-1}B_{k-j}(x)}{(j-1)!}\bigg)(|v^{(k)}(s)|^{2}-u^{(k)}(s))ds\notag\\
&\coloneqq&\int_{0}^{x}R(x,
s)(|v^{(k)}(s)|^{2}-u^{(k)}(s))ds,\label{65}
\endr
where
 \begin{align*} R(x,
s)=\sum_{j=1}^{k}\frac{(x-s)^{j-1}B_{k-j}(x)}{(j-1)!}.\end{align*}
  By \eqref{65} and using the iteration we obtain 
\begr &&|v^{(k)}(x)|^{2}-u^{(k)}(x)\nonumber\\
& \leqslant&\int_{0}^{x}R(x,
s)(|v^{(k)}(s)|^{2}-u^{(k)}(s))ds\nonumber\\
&\leqslant& \int_{0}^{x}R(x, s)\int_{0}^{s}R(s,
s_{1})(|v^{(k)}(s_{1})|^{2}-u^{(k)}(s_{1}))ds_{1}ds\notag\\
&\leqslant& \cdots\nonumber\\
& \leqslant& \int_{0}^{x}R(x,
s)\int_{0}^{s}\cdots\int_{0}^{s_{m-1}}R(s_{m-1},
s_{m})(|v^{(k)}(s_{m})|^{2}-u^{(k)}(s_{m}))ds_{m}\cdots
ds_{1}ds\notag.
\endr
For a fixed point $ x\in[0, 1) $, it is easy to see that
\begin{align}
C(x)\coloneqq\int_{0}^{x}R^{m+1}(x,
s_{m})\big||v^{(k)}(s_{m})|^{2}-u^{(k)}(s_{m})\big|ds_{m} \notag
\end{align} is bounded about the fixed point $ x\in[0, 1) $, then by
$ B_{k-j}(x)\geqslant 0 $ for all $ x \in [0, 1)\backslash E $ and $
j=1, \cdots, k, $ and $ R(x, s)=0 $ for $ 0<x<s<1 $, we have

\begin{align}|v^{(k)}(x)|^{2}-u^{(k)}(x)\leqslant \frac{1}{m!}\int_{0}^{x}R^{m+1}(x,
s_{m})\big||v^{(k)}(s_{m})|^{2}-u^{(k)}(s_{m})\big|ds_{m}=\frac{C(x)}{m!}.\label{66}\end{align} Since $ C(x)/m!\rightarrow 0 $
as $ m\rightarrow +\infty $, we know the left side of \eqref{66} can be arbitrary small, and hence $ |v^{(k)}(x)|^{2}\leqslant u^{(k)}(x)
$ on $ [0, 1)\backslash E $. Furthermore by \eqref{57} and \eqref{63} we have
   \begr
&&|v^{(k-j)}(x)|^{2}-2^{j}u^{(k-j)}(x)\nonumber\\
&=&2^{j}\sum_{m=1}^{j}\frac{x^{j-m}}{(j-m)!}\bigg(|v^{(k-m)}(0)|^{2}-u^{(k-m)}(0)\bigg)\nonumber\\
&& +2^{j}\int_{0}^{x}\frac{(x-s)^{j-1}}{(j-1)!}  \bigg(|v^{(k)}(s)|^{2}-u^{(k)}(s)\bigg)ds\notag\\
&\leqslant& 2^{j}\int_{0}^{x}\frac{(x-s)^{j-1}}{(j-1)!} \bigg(|v^{(k)}(s)|^{2}-u^{(k)}(s)\bigg)ds.\label{67}
\endr
Applying $ |v^{(k)}(x)|^{2}\leqslant u^{(k)}(x) $ on $ [0,1)\backslash E $ to \eqref{67} we have
  $$ |v^{(j)}(x)|^{2}\leqslant 2^{k-j}u^{(j)}(x) $$ for all $ x \in [0, 1)\backslash E $ and  $
j=0, \cdots, k-1. $ Combining the estimates for $ j=k $ and $ j=0,
\cdots, k-1, $ the conclusion of Lemma \ref{69} follows, and the
proof is complete.
\end{proof}

Now, we generalize Theorem 5.1  in \cite{hkr1} to the case of
nonlinear differential equation, and we give the Theorem \ref{70}
below.  In fact, let us replace $ \nu<r<R $ in \cite[Theorem 5.1]{hkr1} with
$ \nu<r<1 $. Taking $ \varrho=(1+r)/2 $, then $ \nu<r<\varrho<1 $
and let $ \varepsilon_{0}>0 $. Thus
 $$h_{\theta}(x)\coloneqq
\max_{j=0,\cdots, k-1}n_{0}|A_{j}(xe^{i\theta})|^{1/(k-j)} $$ is
Riemann integrable on $ [\nu, \varrho] $. So there exists a
partition $ P=\{\nu=x_{0}, x_{1}, x_{2},\cdots, x_{n-1},
x_{n}=\varrho \} $ of $ [\nu, \varrho] $ such that $ x_{j}\neq r $
for all $ j=0,\cdots,n $, and
\begin{align*}
U(P,
h_{\theta})-\int_{\nu}^{\varrho}h_{\theta}(s)ds<\varepsilon_{0},
\end{align*}
where $ U(P, h_{\theta}) $ is the upper Riemann sum of $ h_{\theta}
$ corresponding to the partition $ P $. Let
\begin{align}&g_{\theta}(t)=n_{c}\cdot\sup_{x_{j}\leqslant x\leqslant
x_{j+1}}h_{\theta}(x),\quad x_{j}\leqslant x\leqslant x_{j+1},
j=0,\cdots,n-1, \label{79}\\& \delta_{j}\coloneqq \left\{
\begin{array}{ll}
 0, & \text{if $A_{j}(z)\equiv 0 $,}\\
1, & \text{otherwise},
\end{array}\right.\quad j=0,\cdots,k-1, \notag \\& C_{0}\coloneqq \max_{j=0,\cdots,
k-1}\bigg(\frac{|f^{(j)}(z_{\theta})|^{n_{0}}}{g_{\theta}(\nu)^{j}}\bigg)\nonumber
\shortintertext{and}
&(\upsilon^{(k)})^{n_{0}}+p_{k-1}(t)(\upsilon^{(k-1)})^{n_{0}}+\cdots+p_{1}(t)(\upsilon')^{n_{0}}+p_{0}(t)(\upsilon)^{n_{0}}=0,\notag
\end{align}
where $ n_{0}>1 $ and $ p_{j}(t)\coloneqq
e^{in_{0}(k-j)\theta}A_{j}(te^{i\theta}) $ for $ j=0,\cdots, k-1 $.
Then we get $$ |p_{j}(t)|=|A_{j}(te^{i\theta})|\leqslant
n_{0}^{-(k-j)}(1/n_{c})g_{\theta}(t)^{k-j}\delta_{j} .$$  Now let us
replace these new quantities and notations with the corresponding
ones used in \cite[Theorem 5.1]{hkr1}, and then by applying the same
reasoning used in \cite[Theorem 5.1]{hkr1} and Lemma \ref{69} above
we can obtain the following result.

\begin{theorem}\label{70}
Let $ f $  be a solution of \eqref{19} with $ n_{j}=n_{0}>1 $ for $
j=1,\cdots, k $, let $ n_{c} $ be the number of nonzero coefficients
$ A_{j}(z) $ for $ j=0, \cdots, k-1,$ and let $ \theta\in [0, 2\pi)
$ and $ \varepsilon>0 $. If $ z_{\theta}=\nu e^{i\theta}\in
\mathbb{D} $ satisfies $ A_{j}(z_{\theta})\neq 0 $ for some $
j=0,\cdots, k-1 $, then for all $ \nu<r<1 $,
\begin{align}\label{86} |f(re^{i\theta})|^{n_{0}}\leqslant C\exp
\bigg(n_{c}\int_{\nu}^{r}\max_{j=0,\cdots,
k-1}n_{0}|A_{j}(te^{i\theta})|^{1/(k-j)}dt\bigg),\end{align} where $
C>0 $ is a constant satisfying
\begin{align*} C\leqslant (1+\varepsilon)n_{0}^{k}\max_{j=0,\cdots, k-1}\bigg(\frac{|f^{(j)}(z_{\theta})|^{n_{0}}}{(n_{c})^{j}
\max_{n=0,\cdots, k-1}n^{
j}_{0}|A_{n}(z_{\theta})|^{j/(k-n)}}\bigg).
\end{align*}
\end{theorem}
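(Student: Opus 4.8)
The plan is to restrict everything to the ray $\{te^{i\theta}:\nu\le t<1\}$, turn \eqref{19} into a one-dimensional equation of the form \eqref{58}, and then run the nonlinear Herold comparison of Lemma \ref{69} against a linear equation whose piecewise constant coefficients are manufactured from the step function $g_\theta$. This is the nonlinear analogue of the argument proving \cite[Theorem 5.1]{hkr1} (Theorem \ref{73} above), and most of the scaffolding — the partition $P$ of $[\nu,\varrho]$, the functions $h_\theta$, $g_\theta$, the numbers $\delta_j$, and the constant $C_0$ — has already been assembled in the paragraph preceding the statement.

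First I would record the reduction. Putting $v(t)=f(te^{i\theta})$ one has $v^{(m)}(t)=e^{im\theta}f^{(m)}(te^{i\theta})$, so inserting this into \eqref{19} (with all $n_j=n_0$) and multiplying through by $e^{ikn_0\theta}$ gives
\begin{align*}
(v^{(k)})^{n_0}+p_{k-1}(t)(v^{(k-1)})^{n_0}+\cdots+p_0(t)(v)^{n_0}=0,\qquad p_j(t)=e^{in_0(k-j)\theta}A_j(te^{i\theta}),
\end{align*}
which is an equation of type \eqref{58} with $-p_{k-j}$ in the role of $A_{k-j}$, and whose coefficients have moduli $|p_j(t)|=|A_j(te^{i\theta})|$ — all that Lemma \ref{69} actually sees. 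Then I would check the hypotheses of Lemma \ref{69} with $E=P$ and $B_{k-j}(t)=g_\theta(t)^{j}\delta_{k-j}/n_c$ for $j=1,\dots,k$: from $n_0|A_j(te^{i\theta})|^{1/(k-j)}\le h_\theta(t)\le g_\theta(t)/n_c$ one gets $|p_{k-j}(t)|\le n_0^{-j}B_{k-j}(t)$ off $P$, while $|v^{(m)}(\nu)|^{n_0}=|f^{(m)}(z_\theta)|^{n_0}\le C_0\,g_\theta(\nu)^{m}=u^{(m)}(\nu)$ by the definition of $C_0$ (note $g_\theta(\nu)>0$ precisely because $A_j(z_\theta)\ne0$ for some $j$, which is where that hypothesis is used). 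So Lemma \ref{69} applies with $u$ the $C^{k-1}$ solution of \eqref{59} on $[\nu,\varrho]\setminus P$ having $u^{(m)}(\nu)=C_0\,g_\theta(\nu)^{m}$.

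The core computation is that on each subinterval $[x_l,x_{l+1}]$, where $g_\theta\equiv\gamma_l=n_c\sup_{[x_l,x_{l+1}]}h_\theta$, equation \eqref{59} has constant coefficients with characteristic polynomial $\lambda^{k}-\sum_{j=1}^{k}(\gamma_l^{j}\delta_{k-j}/n_c)\lambda^{k-j}$; the identity $\sum_{i=0}^{k-1}\delta_i=n_c$ shows $\lambda=\gamma_l$ is a root, indeed the largest real one. Hence, starting from the positive data at $\nu$ — where already $u(t)=C_0e^{\gamma_0(t-\nu)}$ on the first piece — and propagating it across the partition points, one obtains $u(r)\le C_0'\exp\big(\int_\nu^{r}g_\theta(s)\,ds\big)$ with $C_0'$ controlled by $C_0$, and then Lemma \ref{69} taken with $j=0$ at the point $r\in(\nu,\varrho)\setminus P$ yields $|f(re^{i\theta})|^{n_0}=|v(r)|^{n_0}\le n_0^{k}u(r)$.

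Finally I would pass to the limit. Since $\int_\nu^{r}g_\theta\le n_c\,U(P,h_\theta)<n_c\int_\nu^{\varrho}h_\theta+n_c\varepsilon_0$, refining $P$, letting $\varepsilon_0\to0$, and using that $h_\theta$ is continuous (hence Riemann integrable) on $[\nu,1)$ brings the exponent down to $n_c\int_\nu^{r}\max_{j}n_0|A_j(te^{i\theta})|^{1/(k-j)}\,dt$ and simultaneously sends $g_\theta(\nu)\to n_c\max_n n_0|A_n(z_\theta)|^{1/(k-n)}$, so that $n_0^{k}C_0'$ tends to $(1+\varepsilon)n_0^{k}\max_j|f^{(j)}(z_\theta)|^{n_0}\big(n_c^{j}\max_n n_0^{j}|A_n(z_\theta)|^{j/(k-n)}\big)^{-1}$, which is the asserted bound on $C$. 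The hard part is not any single estimate but the bookkeeping in the previous step: one must check that $u$ stays a genuine $C^{k-1}$ solution of \eqref{59} with nonnegative derivatives and that carrying its data from one subinterval to the next introduces no uncontrolled constant — this is precisely the portion taken over from the proof of \cite[Theorem 5.1]{hkr1}.
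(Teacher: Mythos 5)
Your proposal is correct and follows essentially the same route as the paper: the paper itself only sketches this proof by introducing the ray restriction with $p_j(t)=e^{in_0(k-j)\theta}A_j(te^{i\theta})$, the partition $P$, the step majorant $g_\theta$, the indicators $\delta_j$ and the constant $C_0$, and then invoking Lemma \ref{69} together with the argument of \cite[Theorem 5.1]{hkr1}. Your additional details (verifying $|p_{k-j}|\leqslant n_0^{-j}B_{k-j}$ with $B_{k-j}=g_\theta^{\,j}\delta_{k-j}/n_c$, the characteristic root $\lambda=\gamma_l$ coming from $\sum_i\delta_i=n_c$, and the passage $\varepsilon_0\to 0$ with refinement of $P$) are exactly the ingredients that argument supplies, so no further comment is needed.
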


By reviewing the reasoning of Theorem \ref{70} above, we  can obtain
the following result.

\begin{coro}\label{10} Let $ f $ be a solution of \eqref{19} with $ n_{j}=n_{0}>1 $ for
$j=1,\cdots, k $, let $ n_{c} $ be the number of nonzero
coefficients $ A_{j}(z) $ for $ j=0, \cdots,k-1,$ and let $
\theta\in [0, 2\pi) $  and $ \varepsilon>0 $. If $ z_{\theta}=\nu
e^{i\theta}\in \mathbb{D} $ satisfies $A_{j}(z_{\theta})\neq 0 $ for
some $ j=0,\cdots, k-1 $, then for $ r\in [\nu, 1) $ we have
\begin{align}\label{80}
 |f^{(j)}(re^{i\theta})|^{n_{0}}\leqslant&C\bigg(\sup_{\nu\leqslant x\leqslant(1+r)/2}\bigg(\max_{j=0,\cdots,
k-1}n_{0}|A_{j}(xe^{i\theta})|^{1/(k-j)}\bigg)\bigg)^{j}\nonumber\\
&\times\exp
\bigg(n_{c}\int_{\nu}^{r}\max_{j=0,\cdots,k-1}n_{0}|A_{j}(te^{i\theta})|^{1/(k-j)}dt\bigg)\end{align}
for $j=0,\cdots,k, $ and $ C>0 $ is a constant satisfying
\begin{align*}C\leqslant (1+\varepsilon)n_{c}^{j}n_{0}^{k-j}\max_{j=0,\cdots, k-1}\bigg(\frac{|f^{(j)}(z_{\theta})|^{n_{0}}}{(n_{c})^{j}
\max_{n=0,\cdots, k-1}n^{j}_{0}|A_{n}(z_{\theta})|^{j/(k-n)}}\bigg).
\end{align*}
\end{coro}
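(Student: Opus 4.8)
The plan is to reopen the proof of Theorem \ref{70} and, in the comparison step, to keep the bounds on \emph{all} derivatives of the majorant rather than only on its value. Fix $\theta$ and $r\in[\nu,1)$ with $A_j(z_\theta)\ne0$ for some $j$, put $\varrho=(1+r)/2$, and run the construction preceding Theorem \ref{70}: the partition $P=\{\nu=x_0<\cdots<x_n=\varrho\}$ with $x_i\ne r$, the step function $g_\theta$ (constant and equal to $n_c\sup_{[x_i,x_{i+1}]}h_\theta$ on $[x_i,x_{i+1}]$), the indicators $\delta_j$, the constant $C_0=\max_{0\le j\le k-1}\big(|f^{(j)}(z_\theta)|^{n_0}/g_\theta(\nu)^{j}\big)$, and the comparison equation \eqref{59} with coefficients $B_{k-j}=n_c^{-1}g_\theta^{\,j}\delta_{k-j}$. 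Since $f$ solves \eqref{19} with all exponents equal to $n_0$, the restriction $v(t)=f(te^{i\theta})$ satisfies, off the finite set $E$ made up of the partition points together with the zeros on $[\nu,\varrho]$ of the coefficients that are not identically zero along the ray, an equation of the shape \eqref{58} whose coefficients are dominated by the $B_{k-j}$ in the way required by Lemma \ref{69}. Choosing the majorant $u$ with $u^{(k-m)}(\nu)\ge|v^{(k-m)}(\nu)|^{n_0}$, $m=1,\dots,k$, Lemma \ref{69} gives
$$|f^{(j)}(re^{i\theta})|^{n_0}=|v^{(j)}(r)|^{n_0}\le n_0^{\,k-j}\,u^{(j)}(r),\qquad j=0,1,\dots,k,$$
so everything comes down to estimating $u^{(j)}(r)$.

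I would then make the majorant explicit. For the comparison equation with the coefficients above one checks directly --- using $\sum_{m=1}^{k}\delta_{k-m}=n_c$ --- that $u(x)=C_0\exp\big(\int_\nu^x g_\theta(s)\,ds\big)$ solves \eqref{59} off the partition points; since $g_\theta(\nu)\ge n_c h_\theta(\nu)>0$ one also has $u^{(k-m)}(\nu)=g_\theta(\nu)^{k-m}C_0\ge|f^{(k-m)}(z_\theta)|^{n_0}=|v^{(k-m)}(\nu)|^{n_0}$, so this $u$ is admissible in Lemma \ref{69}. Because $g_\theta$ is a step function, $u^{(j)}(x)=g_\theta(x)^{j}u(x)$ away from the partition points, whence, as $r<\varrho$,
$$u^{(j)}(r)=g_\theta(r)^{j}u(r)\le\Big(n_c\sup_{\nu\le x\le(1+r)/2}h_\theta(x)\Big)^{j}u(r).$$
For $u(r)$ itself, the relevant estimate is implicit in the proof of Theorem \ref{70}: since $g_\theta=n_c\widetilde h_\theta$ with $\widetilde h_\theta\ge h_\theta$ the step majorant of $h_\theta$ adapted to $P$, choosing $P$ so that $\int_\nu^{\varrho}(\widetilde h_\theta-h_\theta)<\varepsilon_0$ gives $\int_\nu^{r}g_\theta< n_c\int_\nu^{r}h_\theta+n_c\varepsilon_0$ and hence $u(r)\le(1+\varepsilon)C_0\exp\big(n_c\int_\nu^{r}h_\theta(t)\,dt\big)$ once $\varepsilon_0$ is small enough. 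Plugging this into $|f^{(j)}(re^{i\theta})|^{n_0}\le n_0^{\,k-j}u^{(j)}(r)$ and absorbing $n_0^{\,k-j}$, $n_c^{\,j}$ and $(1+\varepsilon)C_0$ into $C$ yields \eqref{80}; and since $g_\theta(\nu)^{j}\ge(n_c)^{j}\max_{0\le n\le k-1}n_0^{\,j}|A_n(z_\theta)|^{j/(k-n)}$, the constant thus produced is at most $(1+\varepsilon)n_c^{\,j}n_0^{\,k-j}\max_{0\le m\le k-1}\big(|f^{(m)}(z_\theta)|^{n_0}/((n_c)^{m}\max_n n_0^{\,m}|A_n(z_\theta)|^{m/(k-n)})\big)$, which is the bound on $C$ in the statement.

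Beyond the bookkeeping there is no serious obstacle: this is the argument of Theorem \ref{70} carried one derivative further. The point that most needs care is the one already implicit in Theorem \ref{70} --- writing down the explicit majorant $u(x)=C_0\exp(\int_\nu^x g_\theta)$ and verifying it is admissible in Lemma \ref{69}, the failure of $u$ to be $C^{k}$ at the finitely many partition points being harmless since those points belong to $E$. The genuinely new ingredient, the identity $u^{(j)}=g_\theta^{\,j}u$ together with the crude replacement $g_\theta(r)\le n_c\sup_{[\nu,(1+r)/2]}h_\theta$, is elementary: differentiating an exponential of a step function reproduces that exponential times a power of the step function. Corollary \ref{10} then follows for all $j=0,\dots,k$, the case $j=0$ recovering Theorem \ref{70}.
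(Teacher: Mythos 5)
Your argument is correct and follows essentially the same route as the paper: the paper's proof also reopens the construction of Theorem \ref{70}, extracts the intermediate bound $|f^{(j)}(re^{i\theta})|^{n_{0}}\leqslant C\,(g_{\theta}(r))^{j}\exp\big(n_{c}\int_{\nu}^{r}\max_{j}n_{0}|A_{j}(te^{i\theta})|^{1/(k-j)}dt\big)$ with $C\leqslant(1+\varepsilon)n_{0}^{k-j}C_{0}$, and then replaces $g_{\theta}(r)$ by $n_{c}\sup_{\nu\leqslant x\leqslant(1+r)/2}h_{\theta}(x)$ exactly as you do. You merely make explicit (the majorant $u=C_{0}\exp(\int_{\nu}^{x}g_{\theta})$, the identity $u^{(j)}=g_{\theta}^{j}u$, and the admissibility check in Lemma \ref{69}) what the paper delegates to the proof of Theorem \ref{70} and the reference to Heittokangas--Korhonen--R\"att\"y\"a, so no substantive difference.
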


\begin{proof} For $ r\in [\nu,(1+r)/2]\backslash P $, it follows from the proof of Theorem \ref{70} that
\begin{align}\label{81}|f^{(j)}(re^{i\theta})|^{n_{0}}\leqslant C(g_{\theta}(r))^{j}\exp
\bigg(n_{c}\int_{\nu}^{r}\max_{j=0,\cdots,
k-1}n_{0}|A_{j}(te^{i\theta})|^{1/(k-j)}dt\bigg),\end{align} where $
P=\{\nu=x_{0}, x_{1}, x_{2},\cdots, x_{n-1}, x_{n}=(1+r)/2 \} $ is a
partition of $ [\nu, (1+r)/2] $ as defined in the proof of Theorem
\ref{70}, and \begin{align*} C\leqslant(1+\varepsilon)
n_{0}^{k-j}\max_{j=0,\cdots,
k-1}\bigg(\frac{|f^{(j)}(z_{\theta})|^{n_{0}}}{(n_{c})^{j}
\max_{n=0,\cdots, k-1}n^{j}_{0}|A_{n}(z_{\theta})|^{j/(k-n)}}\bigg).
\end{align*}
By \eqref{79} we have \begin{align} \label{82}
g_{\theta}(r)&=n_{c}\cdot\sup_{x_{j}\leqslant x\leqslant
x_{j+1}}h_{\theta}(x)\leqslant n_{c}\cdot\sup_{\nu\leqslant
x\leqslant(1+r)/2}h_{\theta}(x)\nonumber\\
&=n_{c}\cdot\sup_{\nu\leqslant
x\leqslant(1+r)/2}\bigg(\max_{j=0,\cdots,
k-1}n_{0}|A_{j}(xe^{i\theta})|^{1/(k-j)}\bigg),
\end{align}
  for $ x_{j}\leqslant r\leqslant x_{j+1} $, $ j=0,\cdots,n-1.$ Thus taking \eqref{82} into \eqref{81} we have
 \begin{align}|f^{(j)}(re^{i\theta})|^{n_{0}}\leqslant&
C\bigg(\sup_{\nu\leqslant x\leqslant(1+r)/2}\bigg(\max_{j=0,\cdots,
k-1}n_{0}|A_{j}(xe^{i\theta})|^{1/(k-j)}\bigg)\bigg)^{j}\nonumber\\
&\times \exp \bigg(n_{c}\int_{\nu}^{r}\max_{j=0,\cdots,
k-1}n_{0}|A_{j}(te^{i\theta})|^{1/(k-j)}dt\bigg)\end{align}
 for $
j=0,\cdots,k.$ Since $ r\in[\nu, 1) $ is arbitrary, the conclusion
follows and the proof is complete.
\end{proof}

By Theorem \ref{70},  we get the following result.

\begin{theorem}\label{87}  Suppose $ 0\leqslant s<1 $. Let all coefficients $ A_{j}(z) $ of
\eqref{19} with $ n_{j}=n_{0}>1 $ for $ j=1,\cdots, k $,   belong to the space $ H_{s}^{\infty} $, let $ n_{c} $ be the number of nonzero
coefficients $ A_{j}(z) $, $ j=0, \cdots, k-1 $ and let $ \theta\in [0, 2\pi) $ and $ \varepsilon>0 $. If $ z_{\theta}=\nu
e^{i\theta}\in \mathbb{D} $ satisfies $ A_{j}(z_{\theta})\neq 0 $ for some $ j=0,\cdots, k-1 $, then all solutions of \eqref{19}
belong to the $ H^{\infty} $ space.
\end{theorem}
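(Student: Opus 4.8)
The plan is to obtain the boundedness of an arbitrary solution $f$ of \eqref{19} directly from the radial estimate \eqref{86} of Theorem \ref{70}: membership of the coefficients in $H_s^\infty$ with $0\le s<1$ will be seen to make the exponential factor in \eqref{86} stay bounded as $r\to 1$, and a careful choice of the base points $z_\theta$ will make the constant in \eqref{86} independent of $\theta$. First, the hypothesis that $A_j(z_\theta)\neq 0$ for some $j$ means in particular that not all $A_j$ vanish identically; fix one coefficient $A_{j_0}\not\equiv 0$. (If instead all $A_j\equiv 0$, then \eqref{19} gives $f^{(k)}\equiv 0$, so $f$ is a polynomial and trivially $f\in H^\infty$.) Since the zeros of $A_{j_0}$ are isolated in $\mathbb{D}$ and cluster only on $\partial\mathbb{D}$, only countably many circles $\{|z|=\rho\}$ meet the zero set of $A_{j_0}$, so I may fix $\rho_0\in(1/2,1)$ with $|A_{j_0}|\ge m>0$ on $\{|z|=\rho_0\}$, and set $M_j:=\max_{|z|=\rho_0}|f^{(j)}(z)|<\infty$ for $j=0,\dots,k-1$.

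Next, for each $\theta\in[0,2\pi)$ I apply Theorem \ref{70} with $z_\theta=\rho_0 e^{i\theta}$ (allowed since $A_{j_0}(z_\theta)\neq 0$) and check that both factors on the right of \eqref{86} are controlled uniformly in $\theta$. For the constant $C$: the numerators $|f^{(j)}(z_\theta)|^{n_0}$ are at most $M_j^{n_0}$, the $j=0$ denominator equals $1$, and for $j\ge 1$ the denominator $(n_c)^j\max_n n_0^j|A_n(z_\theta)|^{j/(k-n)}$ is bounded below by $(n_0 n_c)^j|A_{j_0}(z_\theta)|^{j/(k-j_0)}\ge(n_0 n_c\min\{1,m\})^j$, so $C\le C_0$ for a constant $C_0$ independent of $\theta$. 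For the exponential factor: $A_n\in H_s^\infty$ yields $|A_n(te^{i\theta})|\le\|A_n\|_{H_s^\infty}(1-t^2)^{-s}\le\|A_n\|_{H_s^\infty}(1-t)^{-s}$, and, using $k-j\ge 1$, $0\le s<1$ and $0<1-t\le 1$,
\[
\max_{j=0,\dots,k-1}n_0|A_j(te^{i\theta})|^{1/(k-j)}\le C_1(1-t)^{-s}
\]
with $C_1$ depending only on $n_0$ and the norms $\|A_j\|_{H_s^\infty}$. Since $s<1$, $\int_{\rho_0}^1(1-t)^{-s}\,dt<\infty$, so the exponential in \eqref{86} never exceeds $M:=\exp\bigl(n_cC_1/(1-s)\bigr)$, independently of $r$ and $\theta$.

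Combining the two bounds, \eqref{86} gives $|f(re^{i\theta})|^{n_0}\le C_0 M$ for all $\rho_0<r<1$ and all $\theta$; taking $n_0$-th roots, $|f(z)|\le(C_0 M)^{1/n_0}$ on $\rho_0<|z|<1$, while on $\{|z|\le\rho_0\}$ the analytic function $f$ is automatically bounded, so $f\in H^\infty$. The step I expect to be the main obstacle is securing the uniformity in $\theta$ of the constant $C$ from Theorem \ref{70}: that constant involves both $f^{(j)}(z_\theta)$ and $A_n(z_\theta)$ in the denominator, and along directions for which $\rho_0 e^{i\theta}$ approaches a common zero of all the coefficients it would diverge; fixing a single circle $\{|z|=\rho_0\}$ that avoids the zeros of one fixed nontrivial coefficient $A_{j_0}$ is precisely what keeps numerators and denominators under simultaneous control, and verifying the existence of such $\rho_0$ and the $\theta$-independence of the final bounds is the only delicate point.
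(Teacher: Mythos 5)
Your proposal is correct and follows essentially the same route as the paper: invoke the growth estimate \eqref{86} of Theorem \ref{70}, bound $|A_j(te^{i\theta})|$ by $\|A_j\|_{H_s^{\infty}}(1-t^2)^{-s}$, and use $0\leqslant s<1$ to see that the exponent stays bounded as $r\to 1$, whence $f\in H^{\infty}$. The one place you go beyond the paper's own argument is the uniformity in $\theta$: the published proof treats the constant $C$ in \eqref{86} as fixed even though it depends on $z_{\theta}$ through $f^{(j)}(z_{\theta})$ and $A_n(z_{\theta})$, whereas your choice of a circle $|z|=\rho_0$ avoiding the zeros of one fixed nontrivial coefficient $A_{j_0}$ makes both the constant and the exponential factor uniformly controlled in $\theta$ — a legitimate tightening of a step the paper leaves implicit.
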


\begin{proof} Since the conditions in Theorem \ref{87} satisfy the conditions of Theorem
\ref{70}, all solutions of \eqref{19} with $ n_{j}=n_{0}>1 $ for $j=1,\cdots, k $ have the estimate \eqref{86}. In addition, since all coefficients
of \eqref{19} belong to $ H_{s}^{\infty} $,   we have 
$$
|A_{j}(z)|\leqslant \|A_{j}\|_{H_{s}^{\infty}}/(1-|z|^{2})^{s},
~~\mbox{ for } j=0,\cdots,k-1 .$$
  Take $L=max_{j=0,\cdots,k-1}\{\|A_{j}\|_{H_{s}^{\infty}}\} $. From
\eqref{86} we have
\begin{align}|f(re^{i\theta})|^{n_{0}}&\leqslant C\exp
\bigg(n_{c}\int_{\nu}^{r}\max_{j=0,\cdots,
k-1}n_{0}|A_{j}(te^{i\theta})|^{1/(k-j)}dt\bigg)\notag\\&\leqslant
C\exp \bigg(n_{c}\int_{\nu}^{r}\max_{j=0,\cdots,
k-1}n_{0}L^{1/(k-j)}(1-t^{2})^{-s/(k-j)}dt\bigg).\notag\end{align}
Without loss of generality, taking $ L=1 $ we obtain
  \begin{align}
|f(re^{i\theta})|^{n_{0}}&\leqslant C\exp
\bigg(n_{c}\int_{\nu}^{r}\max_{j=0,\cdots,
k-1}n_{0}(1-t^{2})^{-s/(k-j)}dt\bigg)\nonumber\\
&\leqslant C\exp
\bigg(n_{c}n_{0}\int_{\nu}^{r}(1-t^{2})^{-s}dt\bigg)<\infty.\end{align}
Thus $ f\in H^{\infty} $, and the conclusion follows. The proof is
complete.
\end{proof}
By Corollary \ref{10} we have the following result.
\begin{theorem} \label{76} Suppose $ 0<s<\infty $. Let all the coefficients $ A_{j}(z) $ of
\eqref{19} with $ n_{j}=n_{0}>1 $ for $ j=1,\cdots, k $,   belong to
the Bloch space $ \mathfrak{B} $. Let $ \theta\in [0, 2\pi) $,   $
\varepsilon>0 $ and $ n_{c} $ be the number of nonzero coefficients
$ A_{j}(z) $, $ j=0, \cdots, k-1 $. If $z_{\theta}=\nu
e^{i\theta}\in \mathbb{D} $ satisfies $A_{j}(z_{\theta})\neq 0 $ for
some $ j=0,\cdots, k-1 $, then all solutions of \eqref{19} belong to
the space $ \cap_{ 0<s<\infty}\mathfrak{B}^{s} $.
\end{theorem}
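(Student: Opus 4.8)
The plan is to apply Corollary \ref{10} with derivative order $j=1$ to bound $|f'(re^{i\theta})|$ along each ray, and then to exploit that a Bloch function grows no faster than $\log\frac{2}{1-|z|}$. This slow growth keeps the exponential factor in \eqref{80} bounded as $r\to 1$, so $|f'(re^{i\theta})|$ itself grows at most like a power of $\log\frac{1}{1-r}$; and any such logarithmic power is annihilated by the weight $(1-|z|^{2})^{s}$ for every $s>0$, which is precisely membership in $\bigcap_{0<s<\infty}\mathfrak{B}^{s}$.

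In detail I would proceed as follows. One may assume $n_{c}\geqslant 1$, else the statement is vacuous. Integrating $A_{j}'$ along $[0,z]$ and using the Bloch norm gives, for each $j=0,\dots,k-1$, the estimate $|A_{j}(z)|\leqslant |A_{j}(0)|+\tfrac12\|A_{j}\|_{\mathfrak{B}}\log\frac{1+|z|}{1-|z|}\leqslant C\log\frac{2}{1-|z|}$, with $C$ depending only on $|A_{j}(0)|$ and $\|A_{j}\|_{\mathfrak{B}}$. Since $k-j\geqslant 1$ and $t^{1/(k-j)}\leqslant 1+t$ for $t\geqslant 0$, this yields
\[
\max_{0\leqslant j\leqslant k-1}n_{0}|A_{j}(xe^{i\theta})|^{1/(k-j)}\leqslant C_{1}\log\frac{2}{1-x},\qquad 0\leqslant x<1,
\]
with $C_{1}$ independent of $\theta$. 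Fixing a direction $\theta$ and a base point $z_{\theta}=\nu e^{i\theta}$ at which some $A_{j}(z_{\theta})\neq 0$, I insert this into \eqref{80} with $j=1$: the supremum factor is then at most $C_{1}\log\frac{2}{1-r}$, while the integral in the exponent is at most $C_{1}\int_{0}^{1}\log\frac{2}{1-t}\,dt=:M<\infty$ since that integral converges. Hence
\[
|f'(re^{i\theta})|^{n_{0}}\leqslant C\,e^{n_{c}M}\,C_{1}\log\frac{2}{1-r},\qquad \nu<r<1,
\]
so $|f'(re^{i\theta})|\leqslant C_{2}\bigl(\log\tfrac{2}{1-r}\bigr)^{1/n_{0}}$. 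Finally, for any fixed $s>0$ one has $(1-r^{2})^{s}\bigl(\log\tfrac{2}{1-r}\bigr)^{1/n_{0}}\longrightarrow 0$ as $r\to1^{-}$, and together with the trivial boundedness of $f'$ on compact subsets of $\mathbb{D}$ this gives $\sup_{z\in\mathbb{D}}(1-|z|^{2})^{s}|f'(z)|<\infty$, i.e.\ $f\in\mathfrak{B}^{s}$. Since $s>0$ is arbitrary, $f\in\bigcap_{0<s<\infty}\mathfrak{B}^{s}$.

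The analytic core is short; the step that needs care is uniformity in $\theta$ of the constant $C$ supplied by Corollary \ref{10}, since membership in $\mathfrak{B}^{s}$ is a statement about the supremum over all of $\mathbb{D}$, not just along fixed rays. I deliberately isolated a $\theta$-free constant $C_{1}$ in the Bloch estimate for this reason; what remains is to choose the base points $z_{\theta}$ so that $|f^{(i)}(z_{\theta})|$ stays bounded and $\max_{n}|A_{n}(z_{\theta})|$ stays bounded away from $0$ uniformly in $\theta$. Here I would use that at least one coefficient $A_{j_{0}}$ is a nonzero analytic function, hence has only finitely many zeros on each circle $\{|z|=\rho\}$: fix a small $\rho_{0}$, take $z_{\theta}=\rho_{0}e^{i\theta}$ for all but finitely many $\theta$, and cover $[0,2\pi)$ by finitely many closed arcs, on each of which the relevant quantities are uniformly controlled by continuity after possibly perturbing $\rho_{0}$ to avoid the exceptional directions. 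I expect this bookkeeping, rather than the main estimate, to be the only genuine obstacle; one may alternatively adopt the convention used in the proof of Theorem \ref{87}, where such uniformity is taken for granted.
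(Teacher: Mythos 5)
Your proposal is correct and follows essentially the same route as the paper's own proof: apply Corollary \ref{10} with $j=1$, insert the Bloch growth bound $|A_j(z)|\leqslant C\log\frac{2}{1-|z|}$, note that $\int_0^1\log\frac{2}{1-t}\,dt<\infty$ keeps the exponential factor bounded (the paper handles the exponents $1/(k-j)$ by splitting the integral at $(e-1)/(e+1)$, you by $t^{1/(k-j)}\leqslant 1+t$, which is equivalent in effect), and then let the weight $(1-r^{2})^{s}$ absorb the remaining logarithm. The uniformity in $\theta$ of the constant from Corollary \ref{10} that you flag is a genuine point, but the paper's proof does not address it either---it simply takes $\sup_{re^{i\theta}\in\mathbb{D}}$ at the end---so your sketch for choosing the base points $z_{\theta}$ is, if anything, more careful than the original.
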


\begin{proof} If the coefficients of  \eqref{19} with $ n_{j}=n_{0}>1 $ for $j=1,\cdots, k $,  belong to the Bloch space $ \mathfrak{B} $,  then
\eqref{80} gives that 
  \begin{align} \label{83}
|f'(re^{i\theta})|^{n_{0}}\leqslant &C\bigg(\sup_{\nu\leqslant x\leqslant(1+r)/2}\bigg(\max_{j=0,\cdots,
k-1}n_{0}|A_{j}(xe^{i\theta})|^{1/(k-j)}\bigg)\bigg)\nonumber\\
&\times \exp \bigg(n_{c}\int_{\nu}^{r}\max_{j=0,\cdots,k-1}n_{0}|A_{j}(te^{i\theta})|^{1/(k-j)}dt\bigg),
\end{align}
where $ C $ is a constant satisfying
\begin{align*}C\leqslant (1+\varepsilon)n_{c}n_{0}^{k-1}\max_{j=0,\cdots, k-1}\bigg(\frac{|f^{(j)}(z_{\theta})|^{n_{0}}}{(n_{c})^{j}
\max_{n=0,\cdots, k-1}n^{j}_{0}|A_{n}(z_{\theta})|^{j/(k-n)}}\bigg).
\end{align*}
Since $ A_{j}\in \mathfrak{B} $ for $ j=0, \cdots, k-1 $,   we have
$$ |A_{j}(z)|\leqslant (1/2)\|A_{j}\|_{\mathfrak{B}}\log((1+|z|)/(1-|z|)),~~ j=0, \cdots, k-1 .$$
 Set $ M\coloneqq max_{j=0, \cdots, k-1}\{\|A_{j}\|_{\mathfrak{B}}\} $. Then  \eqref{83} gives
\begin{align}
 |f'(re^{i\theta})|^{n_{0}}\leqslant&C\bigg(\sup_{\nu\leqslant x\leqslant(1+r)/2}\bigg(\max_{j=0,\cdots,
k-1}n_{0}|A_{j}(xe^{i\theta})|^{1/(k-j)}\bigg)\bigg)\nonumber\\
&\times \exp \bigg(n_{c}\int_{\nu}^{r}\max_{j=0,\cdots, k-1}n_{0}|A_{j}(te^{i\theta})|^{1/(k-j)}dt\bigg)\notag\\
\leqslant &C\bigg(\sup_{\nu\leqslant
x\leqslant(1+r)/2}\bigg(\max_{j=0,\cdots,
k-1}n_{0}\bigg(\frac{M}{2}\log\frac{1+x}{1-x}\bigg)^{1/(k-j)}\bigg)\bigg)\nonumber\\
&\times \exp \bigg(n_{c}\int_{\nu}^{r}\max_{j=0,\cdots,
k-1}n_{0}\bigg(\frac{M}{2}\log\frac{1+t}{1-t}\bigg)^{1/(k-j)}dt\bigg)\notag\\
\leqslant& C\bigg(\sup_{\nu\leqslant
x\leqslant(1+r)/2}\bigg(\max_{j=0,\cdots,
k-1}n_{0}\bigg(\frac{M}{2}\log\frac{3+r}{1-r}\bigg)^{1/(k-j)}\bigg)\bigg)\nonumber\\
&\times \exp \bigg(n_{c}\int_{\nu}^{r}\max_{j=0,\cdots,
k-1}n_{0}\bigg(\frac{M}{2}\log\frac{1+t}{1-t}\bigg)^{1/(k-j)}dt\bigg)\notag.
\end{align}
Without loss of generality, taking $ r\geqslant (e-1)/(e+1) $, denoted by $ c_{0} $, and $ M=2 $, we get
  \begr
 &&|f'(re^{i\theta})|^{n_{0}}\leqslant C\bigg(n_{0}\log\frac{3+r}{1-r}\bigg)\exp
\bigg(n_{c}n_{0}\int_{0}^{r}\max_{j=0,\cdots, k-1}\bigg(\log\frac{1+t}{1-t}\bigg)^{1/(k-j)}dt\bigg)\notag\\
&=&C\bigg(n_{0}\log\frac{3+r}{1-r}\bigg)\exp \bigg(n_{c}n_{0}\bigg(\bigg(\int_{0}^{c_{0}}+\int_{c_{0}}^{r}\bigg)\max_{j=0,\cdots,
k-1}\bigg(\log\frac{1+t}{1-t}\bigg)^{1/(k-j)}dt\bigg)\bigg)\notag\\
&=&C\bigg(n_{0}\log\frac{3+r}{1-r}\bigg)\exp \bigg(n_{c}n_{0}\bigg(\int_{0}^{c_{0}}\bigg(\log\frac{1+t}{1-t}\bigg)^{1/k}dt+\int_{c_{0}}^{r}
\log\frac{1+t}{1-t}dt\bigg)\bigg)\notag\\
&\leqslant& C\log(1/(1-r)).\label{84}
\endr
Taking any $ s\in (0, \infty) $, multiplying $(1-r^{2})^{n_{0}s} $ on both sides of \eqref{84}
 and taking $\sup_{re^{i\theta}\in \mathbb{D}} $, we obtain
\begin{align}
\sup_{re^{i\theta}\in \mathbb{D}}|f'(re^{i\theta})|^{n_{0}}(1-r^{2})^{n_{0}s}\leqslant
C\sup_{re^{i\theta}\in \mathbb{D}}(1-r^{2})^{n_{0}s}\log(1/(1-r))<\infty,\notag\end{align}
which implies that $ f\in \mathfrak{B}^{s} $. Since $ s\in (0,\infty) $ is arbitrary, the conclusion follows.
\end{proof}

Finally, we give another estimate for solutions of \eqref{19} with $n_{k}\geqslant n_{j} $ for $ j=0, 1, \cdots, k-1 $ and $
n_{k}\geqslant 1 $ and these conditions are weaker than that in Theorem \ref{70}.

\begin{theorem} Suppose the positive real number $ n_{k}\geqslant n_{j} $ for $ j=0,
1, \cdots, k-1 $ and $ n_{k}\geqslant 1 $ in \eqref{19}. Let $ f $ be a solution of \eqref{19}. Then for $ z=re^{i\theta}\in \mathbb{D}
$, we have
\begin{align}
|f(re^{i\theta})|\leqslant \sum_{m=1}^{k}|f^{(k-m)}(0)|\frac{r^{k-m}}{(k-m)!}+\int_{0}^{r}\frac{(r-s)^{k-1}}{(k-1)!}\bigg(\sum_{i=0}^{\infty}\mathcal
{H}_{i}(s)\bigg)^{1/n_{k}}ds,\notag
\end{align}
where $ \sum_{i=0}^{\infty}\mathcal {H}_{i}(s) $ is a continuous function on $ [0, 1) $, dependent on $ k $, $ A_{k-j} $ for $
j=1,\cdots,k $, $ n_{j} $ for $ j=0, \cdots, k, $ and the initial values $ f^{(j)}(0) $ for $ j=0,\cdots,k-1 $, and the path of
integration is a line segment in $ \mathbb{D} $ joining origin $ 0 $ and $ z $.
\end{theorem}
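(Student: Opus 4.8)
The plan is to iterate the integral representation coming from integrating the equation $k$ times, exactly as in the proof of Lemma~\ref{69}, but now keeping track of everything as an explicit (absolutely convergent) series rather than comparing with an auxiliary majorant. First I would rewrite \eqref{19} along the path from $0$ to $z=re^{i\theta}$: since $n_{k}\geqslant 1$ we may divide by $(f^{(k)})^{n_{k}-\text{stuff}}$ — more precisely, from \eqref{19} we get
$$
|f^{(k)}(z)|^{n_{k}}\leqslant\sum_{j=0}^{k-1}|A_{j}(z)|\,|f^{(j)}(z)|^{n_{j}},
$$
and then, applying the inequality $|f^{(j)}(z)|^{n_{j}}\leqslant 1+|f^{(j)}(z)|^{n_{k}}$ (valid because $n_{j}\leqslant n_{k}$ and $|w|^{a}\leqslant 1+|w|^{b}$ for $0\leqslant a\leqslant b$) one obtains a bound of the form
$$
|f^{(k)}(z)|^{n_{k}}\leqslant \Phi(z)+\sum_{j=0}^{k-1}|A_{j}(z)|\,|f^{(j)}(z)|^{n_{k}},
$$
where $\Phi$ collects the harmless additive constants. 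This is the point of the hypothesis $n_{k}\geqslant n_{j}$ and $n_{k}\geqslant 1$: it linearizes the powers so that the Gronwall-type iteration closes.

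Next I would substitute, for each $j=0,\dots,k-1$, the Taylor-with-integral-remainder formula
$$
|f^{(j)}(z)|\leqslant\sum_{m=1}^{k-j}|f^{(k-m)}(0)|\frac{r^{k-j-m}}{(k-j-m)!}+\int_{0}^{r}\frac{(r-s)^{k-j-1}}{(k-j-1)!}|f^{(k)}(se^{i\theta})|\,ds
$$
(analogous to \eqref{60}), raise to the power $n_{k}$ using $(a+b)^{n_{k}}\leqslant 2^{n_{k}}(a^{n_{k}}+b^{n_{k}})$ and, for the remainder term, $\bigl(\int_{0}^{r}g\bigr)^{n_{k}}\leqslant r^{n_{k}-1}\int_{0}^{r}g^{n_{k}}$ via Hölder. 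This produces, writing $y(s):=|f^{(k)}(se^{i\theta})|^{n_{k}}$, an integral inequality
$$
y(r)\leqslant \mathcal{H}_{0}(r)+\int_{0}^{r}\mathcal{R}(r,s)\,y(s)\,ds,
$$
where $\mathcal{H}_{0}$ is a continuous function built from $k$, the $A_{j}$, the $n_{j}$ and the initial data $f^{(j)}(0)$, and $\mathcal{R}(r,s)=\sum_{j=1}^{k}\frac{(r-s)^{j-1}}{(j-1)!}c_{j}|A_{k-j}(re^{i\theta})|$ is a nonnegative kernel, continuous and bounded on compact subsets of $[0,1)$. Iterating this inequality (as in \eqref{65}--\eqref{66}) gives
$$
y(r)\leqslant\sum_{i=0}^{\infty}\mathcal{H}_{i}(r),\qquad
\mathcal{H}_{i}(r):=\int_{0}^{r}\!\!\int_{0}^{s_{1}}\!\!\cdots\int_{0}^{s_{i-1}}\mathcal{R}(r,s_{1})\cdots\mathcal{R}(s_{i-1},s_{i})\mathcal{H}_{0}(s_{i})\,ds_{i}\cdots ds_{1},
$$
and the Volterra estimate $|\mathcal{H}_{i}(r)|\leqslant \frac{(rM(r))^{i}}{i!}\sup_{[0,r]}|\mathcal{H}_{0}|$, with $M(r)=\sup_{0\leqslant s\leqslant r}\mathcal{R}(r,s)$, shows the series converges uniformly on compacta of $[0,1)$ to a continuous function with the claimed dependence on the data. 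Finally, feeding $y(r)\leqslant\sum_{i}\mathcal{H}_{i}(r)$ back into the $j=0$ case of the Taylor formula
$$
|f(re^{i\theta})|\leqslant\sum_{m=1}^{k}|f^{(k-m)}(0)|\frac{r^{k-m}}{(k-m)!}+\int_{0}^{r}\frac{(r-s)^{k-1}}{(k-1)!}\,y(s)^{1/n_{k}}\,ds
$$
yields exactly the asserted bound.

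The main obstacle I anticipate is purely bookkeeping: propagating the additive constants $\Phi$ (coming from the $|w|^{n_{j}}\leqslant 1+|w|^{n_{k}}$ step) through the iteration so that they are absorbed into $\mathcal{H}_{0}$ and do not destroy the $1/i!$ decay — one must be a little careful that at each iteration step the new additive term is still of the form $\int_{0}^{r}\mathcal{R}(r,s)(\text{previous }\mathcal{H})\,ds$, i.e. that the kernel $\mathcal{R}$ genuinely only involves the coefficients and the geometry, not $f$ itself. Everything else — the Hölder estimates on the integral remainder, the convergence of the Volterra series, and the continuity of the sum — is routine and parallels Lemma~\ref{69}.
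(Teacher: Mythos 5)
Your proposal is correct, and its skeleton --- the Taylor formula with integral remainder for the derivatives of $f$, a convexity bound on the resulting sums, H\"older's inequality to pull the $n_k$-th power inside the remainder integral, a Volterra-type iteration with $1/i!$ decay, and finally feeding the bound for $|f^{(k)}|^{n_k}$ back into the $j=k$ Taylor formula --- is exactly the paper's. The one genuine difference is how you reconcile the exponents $n_j$ with $n_k$. The paper raises \eqref{1} only to the power $n_{k-j}$ and then applies Young's inequality $ab\leqslant a^{p}/p+b^{q}/q$ with $p=n_k/n_{k-j}$ to the product $|A_{k-j}|\cdot\big(\int\cdots\big)^{n_{k-j}}$; this moves all coefficient dependence into the additive term $H(r)$, so the kernel $L(r,s)$ in \eqref{11} involves only $k$ and the $n_j$'s. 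You instead linearize first via $|w|^{n_j}\leqslant 1+|w|^{n_k}$, which keeps $|A_j(re^{i\theta})|$ inside your kernel $\mathcal{R}(r,s)$; that is harmless, since the $A_j$ are analytic, hence bounded on each ray segment $[0,r]\subset[0,1)$, so the Volterra series still converges with factorial speed and its sum has exactly the dependence on $k$, the $A_j$, the $n_j$ and the initial values that the theorem asserts (the statement does not pin down the $\mathcal{H}_i$ beyond that). Your route even has a small advantage: when $n_{k-j}=n_k$ the paper's Young exponent $q=n_k/(n_k-n_{k-j})$ degenerates and that case needs separate (trivial) treatment, whereas your inequality covers it without comment. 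Two cosmetic remarks: no division by powers of $f^{(k)}$ is needed --- the triangle inequality applied to \eqref{19} already gives your first display --- and the ``obstacle'' you flag about propagating the additive term $\Phi$ is not one, since $\Phi$ enters $\mathcal{H}_0$ once, before the iteration starts, and your kernel never involves $f$ itself.
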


\begin{proof}
Let us integrate $ f^{(k)} $ for $ j $ times from origin $ 0 $ to $z $ along the line segment in $ \mathbb{D} $ joining $ 0 $ and $ z
$, and direct computation gives
\begin{align}
|f^{(k-j)}(re^{i\theta})|\leqslant \sum_{m=1}^{j}|f^{(k-m)}(0)|\frac{r^{j-m}}{(j-m)!}+\int_{0}^{r}\frac{(r-s)^{j-1}}{(j-1)!}|f^{(k)}(se^{i\theta})|ds.
\label{1}
\end{align}
 Then  
\begin{align}|f^{(k-j)}(re^{i\theta})|^{n_{k-j}}\leqslant&
n_{k-j}^{j}\sum_{m=1}^{j}|f^{(k-m)}(0)|^{n_{k-j}}\bigg(\frac{r^{j-m}}{(j-m)!}\bigg)^{n_{k-j}}\nonumber\\
&+n_{k-j}^{j}\bigg(\int_{0}^{r}\frac{(r-s)^{j-1}}{(j-1)!}
|f^{(k)}(se^{i\theta})|ds\bigg)^{n_{k-j}}\label{2}
\end{align}
for $ j=0, 1, \cdots, k $. By \eqref{19} we have
  $$
|f^{(k)}|^{n_{k}}\leqslant \sum_{j=1}^{k}|A_{k-j}(re^{i\theta})|
|f^{(k-j)}|^{n_{k-j}},$$
 which, together with \eqref{2}, yields
\begin{align}|f^{(k)}(re^{i\theta})|^{n_{k}}\leqslant&
\sum_{j=1}^{k}|A_{k-j}(re^{i\theta})||f^{(k-j)}|^{n_{k-j}}\notag
\\\leqslant &\sum_{j=1}^{k}n_{k-j}^{j}|A_{k-j}(re^{i\theta})|
\sum_{m=1}^{j}|f^{(k-m)}(0)|^{n_{k-j}}\bigg(\frac{r^{j-m}}{(j-m)!}\bigg)^{n_{k-j}}\nonumber\\
&+\sum_{j=1}^{k}n_{k-j}^{j}|A_{k-j}(re^{i\theta})|\bigg(\int_{0}^{r}\frac{(r-s)^{j-1}}{(j-1)!}
|f^{(k)}(se^{i\theta})|ds\bigg)^{n_{k-j}}.\notag \end{align}
 It is
well known that for $ a\geqslant 0, b\geqslant 0 $, we have $
ab\leqslant a^{p}/p+b^{q}/q $ for $ 1\leqslant p\leqslant \infty,
1\leqslant q\leqslant \infty $ and $ 1/p+1/q=1 $. Taking $p=n_{k}/n_{k-j}\geqslant 1 $, and $ q=n_{k}/(n_{k}-n_{k-j}) $ we
have 
 \begr
&&n_{k-j}^{j}|A_{k-j}(re^{i\theta})|\bigg(\int_{0}^{r}\frac{(r-s)^{j-1}}{(j-1)!}
|f^{(k)}(se^{i\theta})|ds\bigg)^{n_{k-j}}\nonumber\\
&\leqslant&
\frac{n_{k}-n_{k-j}}{n_{k}}(n_{k-j}^{j}|A_{k-j}(re^{i\theta})|)^{n_{k}/(n_{k}-n_{k-j})}
+\frac{n_{k-j}}{n_{k}}\bigg(\int_{0}^{r}\frac{(r-s)^{j-1}}{(j-1)!}
|f^{(k)}(se^{i\theta})|ds\bigg)^{n_{k}},\nonumber
\endr
and so 
  \begin{align}|f^{(k)}(re^{i\theta})|^{n_{k}}\leqslant&
\sum_{j=1}^{k}|A_{k-j}(re^{i\theta})||f^{(k-j)}|^{n_{k-j}}\notag \\
\leqslant&\sum_{j=1}^{k}n_{k-j}^{j}|A_{k-j}(re^{i\theta})|
\sum_{m=1}^{j}|f^{(k-m)}(0)|^{n_{k-j}}\bigg(\frac{r^{j-m}}{(j-m)!}\bigg)^{n_{k-j}}\nonumber\\
&+
\sum_{j=1}^{k}\frac{n_{k}-n_{k-j}}{n_{k}}(n_{k-j}^{j}|A_{k-j}(re^{i\theta})|)^{n_{k}/(n_{k}-n_{k-j})}\notag\\
&+\sum_{j=1}^{k}
\frac{n_{k-j}}{n_{k}}\bigg(\int_{0}^{r}\frac{(r-s)^{j-1}}{(j-1)!}
|f^{(k)}(se^{i\theta})|ds \bigg)^{n_{k}}.\notag\end{align}
  By
H\"{o}lder inequality we have \begin{align}
|f^{(k)}(re^{i\theta})|^{n_{k}}\leqslant&
\sum_{j=1}^{k}\bigg(n_{k-j}^{j}|A_{k-j}(re^{i\theta})|
\sum_{m=1}^{j}|f^{(k-m)}(0)|^{n_{k-j}}\bigg(\frac{r^{j-m}}{(j-m)!}\bigg)^{n_{k-j}}\nonumber\\
&+
\frac{n_{k}-n_{k-j}}{n_{k}}(n_{k-j}^{j}|A_{k-j}(re^{i\theta})|)^{n_{k}/(n_{k}-n_{k-j})}\bigg)\notag\\
&+\sum_{j=1}^{k}
\frac{n_{k-j}}{n_{k}}r^{n_{k}-1}\int_{0}^{r}\frac{(r-s)^{n_{k}(j-1)}}{((j-1)!)^{n_{k}}}
|f^{(k)}(se^{i\theta})|^{n_{k}}ds\notag\\
\coloneqq& H(r)+\int_{0}^{r}L(r,
s)|f^{(k)}(se^{i\theta})|^{n_{k}}ds,\label{11}
\end{align}
where \begin{align}
H(r)=&\sum_{j=1}^{k}\bigg(n_{k-j}^{j}|A_{k-j}(re^{i\theta})|
\sum_{m=1}^{j}|f^{(k-m)}(0)|^{n_{k-j}}\bigg(\frac{r^{j-m}}{(j-m)!}\bigg)^{n_{k-j}}\nonumber\\
&+\frac{n_{k}-n_{k-j}}{n_{k}}(n_{k-j}^{j}|A_{k-j}(re^{i\theta})|)^{n_{k}/(n_{k}-n_{k-j})}\bigg)\nonumber
\end{align}
and
 \begin{align} L(r, s)=\sum_{j=1}^{k}
\frac{n_{k-j}}{n_{k}}\frac{(r-s)^{n_{k}(j-1)}r^{n_{k}-1}}{((j-1)!)^{n_{k}}}.\nonumber\end{align}
By \eqref{11} we have
\begin{align}
|f^{(k)}(re^{i\theta})|^{n_{k}}&\leqslant H(r)+\int_{0}^{r}L(r,
s)|f^{(k)}(se^{i\theta})|^{n_{k}}ds\notag\\&\leqslant
H(r)+\int_{0}^{r}L(r, s)H(s)ds  \notag\\
 & ~~~~+ \int_{0}^{r}L(r, s)\int_{0}^{s}L(s,
s_{1})|f^{(k)}(s_{1}e^{i\theta})|^{n_{k}}ds_{1}ds\notag\\
 &\coloneqq
\mathcal {H}_{0}(r)+\mathcal {H}_{1}(r)+\mathcal {L}_{1}(r),
\end{align}
where $ \mathcal {H}_{0}(r)=H(r) $, $\mathcal
{H}_{1}(r)=\int_{0}^{r}L(r, s)H(s)ds$ and
\begin{align*}  \mathcal {L}_{1}(r)=\int_{0}^{r}L(r,
s)\int_{0}^{s}L(s, s_{1})|f^{(k)}(s_{1}e^{i\theta})|^{n_{k}}ds_{1}ds
\end{align*} are the results of the first iteration by \eqref{11}, and
the $ n $-th iteration  gives
\begin{align}
|f^{(k)}(re^{i\theta})|^{n_{k}}\leqslant\sum_{i=0}^{n}\mathcal
{H}_{i}(r)+\mathcal {L}_{n}(r).
\end{align}
For a fixed point $ r\in[0, 1) $, denote \begin{align*}
S(r)\coloneqq \max_{0\leqslant s\leqslant r}H(s),\,\,T(r)\coloneqq
\max_{0\leqslant r_{1}, r_{2}\leqslant r}L(r_{1},
r_{2})\,\,\text{and}\,\, M(r)\coloneqq \max_{0\leqslant
r_{1}\leqslant r}
 |f^{(k)}(r_{1}e^{i\theta})|^{n_{k}}. \end{align*}
Then for $n\geqslant 3 $, by induction we have
\begin{align}\mathcal{H}_{n}(r)&=\int_{0}^{r}L(r, s)\int_{0}^{s}L(s,
s_{1})\int_{0}^{s_{1}}\cdots\int_{0}^{s_{n-2}}L(s_{n-2},
s_{n-1})H(s_{n-1})ds_{n-1}\cdots
ds\nonumber\\
&\leqslant\frac{T^{n}(r)S(r)}{n!},\notag
\end{align}
and so $ \Sigma_{i=0}^{n}\mathcal {H}_{i}(r) $ is uniformly
convergent to $ \Sigma_{i=0}^{\infty}\mathcal {H}_{i}(r) $ on any
close set $ [0, r] $. Hence $ \Sigma_{i=0}^{\infty}\mathcal
{H}_{i}(r) $ is a continuous function on $ [0, 1) $ and so $
\Sigma_{i=0}^{\infty}\mathcal {H}_{i}(r) $ is integrable on $ [0,1)
$. For $ n\geqslant 1 $, by induction we get
\begin{align} \mathcal{L}_{n}(r)&=\int_{0}^{r}L(r,
s)\int_{0}^{s}L(s,s_{1})\int_{0}^{s_{1}}\cdots\int_{0}^{s_{n-1}}L(s_{n-1},
s_{n})|f^{(k)}(s_{n}e^{i\theta})|^{n_{k}}ds_{n}\cdots ds\nonumber\\
&\leqslant\frac{T^{n+1}(r)M(r)}{n!}\rightarrow
0,\,\,\text{as}\,\,n\rightarrow \infty. \notag
\end{align}
Henceforth,
\begin{align}\label{12}
|f^{(k)}(re^{i\theta})|^{n_{k}}\leqslant\sum_{i=0}^{\infty}\mathcal
{H}_{i}(r).
\end{align}
Taking \eqref{12} to \eqref{1} we obtain
\begin{align}
|f^{(k-j)}(re^{i\theta})|&\leqslant
\sum_{m=1}^{j}|f^{(k-m)}(0)|\frac{r^{j-m}}{(j-m)!}+\int_{0}^{r}\frac{(r-s)^{j-1}}{(j-1)!}|f^{(k)}(se^{i\theta})|ds\notag\\&\leqslant
\sum_{m=1}^{j}|f^{(k-m)}(0)|\frac{r^{j-m}}{(j-m)!}+\int_{0}^{r}\frac{(r-s)^{j-1}}{(j-1)!}\bigg(\sum_{i=0}^{\infty}\mathcal
{H}_{i}(r)\bigg)^{1/n_{k}}ds, \notag
\end{align}
for $ j=0, 1, \cdots, k $. Therefore by taking $ j=k $ we have
\begin{align}
|f(re^{i\theta})|\leqslant
\sum_{m=1}^{k}|f^{(k-m)}(0)|\frac{r^{k-m}}{(k-m)!}+\int_{0}^{r}\frac{(r-s)^{k-1}}{(k-1)!}\bigg(\sum_{i=0}^{\infty}\mathcal
{H}_{i}(s)\bigg)^{1/n_{k}}ds. \notag
\end{align}
 The proof is complete.
\end{proof}

\noindent{\bf Acknowledgement:} The first author is supported by
NNSF of China (No. 11126284) and the
scholarship awarded by China Scho1arship Council (No. [2014]7325).
The second author is supported by NNSF of China (No. 11471143).

\end{document}